\newtheorem{theorem}{Theorem}[section]
\newtheorem{lemma}[theorem]{Lemma}
\newtheorem{proposition}[theorem]{Proposition}
\newtheorem{definition}[theorem]{Definition}
\DeclareMathOperator{\argmin}{argmin}
\DeclareMathOperator{\dom}{dom}
\newcommand{\R}{\mathbb{R}}
\newcommand{\Dom}{\mathrm{Dom}}
\newcommand{\inner}[2]{\langle{#1},{#2}\rangle}
\newcommand{\norm}[1]{\|#1\|}
\newcommand{\ri}{{\mbox{\rm ri\,}}}
\newcommand{\tos}{\rightrightarrows} 
\newcommand{\Y}{\mathcal{Y}}
\newcommand{\X}{\mathcal{X}}
\newcommand{\Z}{\mathcal{Z}}
\newcommand{\V}{\mathcal{V}}
\newcommand{\Sf}{\mathcal{S}}
\newcommand{\inte}{\mathrm{int}}
\newcommand{\vgap}{\vspace{.1in}}
\newcommand{\tx}{\tilde x}
\newcommand{\tz}{\tilde z}
\newcommand{\bi}{\begin{itemize}}
\newcommand{\ei}{\end{itemize}}
\newcommand{\ba}{\begin{array}}
\newcommand{\ea}{\end{array}}
\begin{document}

\title{Extending the ergodic convergence rate of the proximal ADMM}

\author{
    Max L.N. Gon\c calves
    \thanks{Institute of Mathematics and Statistics, Federal University of Goias, Campus II- Caixa
    Postal 131, CEP 74001-970, Goi\^ania-GO, Brazil. (E-mails: {\tt
       maxlng@ufg.br} and {\tt jefferson@ufg.br}).  The work of these authors was
    supported in part by  CNPq Grant  444134/2014-0, 309370/2014-0  and FAPEG/GO.}
    \and
      Jefferson G. Melo \footnotemark[1]
    \and
    Renato D.C. Monteiro
    \thanks{School of Industrial and Systems
    Engineering, Georgia Institute of
    Technology, Atlanta, GA, 30332-0205.
    (email: {\tt monteiro@isye.gatech.edu}). The work of this author
    was partially supported by NSF Grant CMMI-1300221.}
}

\date{November 8, 2016}

\maketitle

\begin{abstract}
Pointwise and ergodic iteration-complexity results for the proximal alternating direction method of multipliers (ADMM) for any
stepsize in $(0,(1+\sqrt{5})/2)$ have been recently established in the literature.
In addition to giving alternative proofs of these results, this paper also extends the ergodic iteration-complexity result to include the
case in which the stepsize is equal to $(1+\sqrt{5})/2$. As far as we know, this is the first ergodic iteration-complexity for
the stepsize $(1+\sqrt{5})/2$  obtained in the ADMM literature.
These results are obtained by showing that the proximal ADMM is 
an instance of a non-Euclidean hybrid proximal extragradient framework whose pointwise and ergodic convergence rate are also
studied.
\\
  \\
  2000 Mathematics Subject Classification: 
  47H05, 47J22, 49M27, 90C25, 90C30, 90C60, 
  65K10.
\\
\\   
Key words: alternating direction method of multipliers,  hybrid proximal extragradient method, non-Euclidean Bregman distances, convex program,
 pointwise iteration-complexity, ergodic iteration-complexity, first-order methods, inexact proximal point method, regular distance generating function.
 \end{abstract}

%
%
%
%
%
%
%
%
%
%
%
%
%
%
%
%
%
%
%
%
%
%
%
%
%

\pagestyle{plain}

\section{Introduction} \label{sec:int}
This paper considers  the following  linearly constrained convex problem 
\begin{equation} \label{optl}
\inf \{ f(y) + g(s) : C y + D s = c \}
\end{equation}
where  $\Sf$, $\Y$ and $\X$ are  finite dimensional inner product spaces, 
$f: \Y \to (-\infty,\infty]$ and $g:\Sf \to (-\infty,\infty]$ are proper
closed convex functions,  $C: \Y \to \X$ and $D: \Sf \to \X$ are linear operators,
and $c \in \X$.  Convex optimization problems with a separable structure such as \eqref{optl}
 appear in many applications areas such as machine learning, compressive sensing and image processing. A well-known method that takes
 advantage of the special structure of \eqref{optl}
is the  alternating direction method of multipliers (ADMM).

 Many variants of the ADMM have been considered  in the literature; see, for example, \cite{ChPo_ref2,Deng1,PADMM_Eckstein,GADMM2015,MJR,Gu2015,Hager,HeLinear,Lin,LanADMM}. 
Here, we  study the proximal ADMM \cite{PADMM_Eckstein,FPST_editor} which, recursively, computes a sequence
$\{(s_k,y_k,x_k)\}$ as follows. 
Given $(s_{k-1},y_{k-1},x_{k-1})$, the $k$-th triple $(s_k,y_k,x_k)$ is determined as
\begin{align}
s_k &= \argmin_{s} \left \{ g(s) - \inner{ {x}_{k-1}}{Ds} +
\frac{\beta}{2} \| C y_{k-1} + D s - c \|^2 + \frac12 \inner{ s-s_{k-1}}{H(s-s_{k-1})}\right\},\nonumber\\
y_k &= \argmin_y \left \{ f(y) - \inner{x_{k-1}}{Cy} +
\frac{\beta}{2} \| C y + D s_k - c \|^2+\frac12 \inner{ y-y_{k-1}}{G(y-y_{k-1})}\ \right\},\label{ADMMclass}\\
x_k &= x_{k-1}-\theta\beta\left[Cy_k+Ds_k-c\right] \nonumber
\end{align}
where $\beta > 0$ is a penalty parameter, $\theta>0$ is a  stepsize parameter, and
 $H: \mathcal{S} \to \mathcal{S}$ and  $G: \Y \to \Y$ are positive semidefinite self-adjoint linear operators.
We refer to the subclass obtained from  \eqref{ADMMclass} by setting $(H,G)=(0,0)$ to as the standard ADMM.
Also, the proximal ADMM with $(H,G)=(\tau I-\beta D^{*}D$,0) for some $\tau \ge \beta \|D\|^2$  is known as the
linearized ADMM or the split inexact Uzawa method (see, e.g.,  \cite{HeLinear,Xahang,Zhang2010}).
It has the desirable feature that, for many applications, its subproblems are   much easier to solve or  even have  closed-form solutions  (see   \cite{Deng1,HeLinear,Wang2012,Yang_linearizedaugmented} for more details).

Pointwise and ergodic iteration-complexity results for the proximal ADMM \eqref{ADMMclass} for any $\theta \in (0,(1+\sqrt{5})/2)$ are established in \cite{Cui,Gu2015}. Our paper develops alternative pointwise and ergodic iteration-complexity results for the proximal ADMM \eqref{ADMMclass}
based on a different but related termination criterion. More specifically, a pointwise iteration-complexity is established for any $\theta \in (0,(1+\sqrt{5})/2)$
and an ergodic one is obtained for any $\theta \in (0,(1+\sqrt{5})/2]$. Hence, our analysis of the ergodic case includes the case
$\theta=(1+\sqrt{5})/2$ which, as far as we know, has not been established yet.
Our approach towards obtaining this extension is based on viewing the proximal ADMM as an instance of a non-Euclidean hybrid proximal extragradient (HPE) framework
whose (pointwise and ergodic) complexity is studied and is then used to derive that of the proximal ADMM. 
\\[2mm]
{\bf Previous related works.} 
 The ADMM   was  introduced in \cite{0352.65034,0368.65053} and is thoroughly discussed in \cite{Boyd:2011,glowinski1984}.
To discuss complexity results about ADMM, we use the terminology
weak pointwise or strong pointwise bounds
to refer to complexity bounds relative to the best of the $k$ first iterates or the last iterate, respectively, to
satisfy a suitable termination criterion.
The first iteration-complexity bound for the  ADMM  was established only recently in  
\cite{monteiro2010iteration} under the assumptions that $C$ is injective.
More specifically, the ergodic iteration-complexity for the standard ADMM  is derived in  \cite{monteiro2010iteration}   for any $\theta \in (0,1]$ while
a weak pointwise iteration-complexity easily follows from the approach in \cite{monteiro2010iteration} for any $\theta \in (0,1)$.
Subsequently, without assuming that $C$ is injective, \cite{HeLinear} established the ergodic iteration-complexity of the proximal  ADMM  \eqref{ADMMclass}
with $G=0$ and $\theta=1$ and, as a consequence, of the  split inexact Uzawa method~\cite{Xahang}.
Paper \cite{He2} establishes the weak pointwise and ergodic iteration-complexity  
of another collection of ADMM instances which includes the standard ADMM for any $\theta \in (0,(1+\sqrt{5})/2)$.
A strong pointwise iteration-complexity bound for the proximal ADMM   \eqref{ADMMclass} with $G=0$ and $\theta =1$ is derived in~\cite{He2015}.
Finally, a number of papers (see for example \cite{Cui,Deng1,GADMM2015,MJR,Gu2015,Hager,Lin,LanADMM} and  references therein) have extended
most of these complexity results to the context of the ADMM class \eqref{ADMMclass} as well as other ADMM classes.

The non-Euclidean HPE framework is a class of inexact proximal point methods
for solving the monotone inclusion problem  which uses  a  relative (instead of summable) error criterion.
The proximal point method, proposed by Rockafellar \cite{Rock:ppa}, is a classical iterative scheme for solving the latter problem.
Paper \cite{Sol-Sv:hy.ext} introduces an Euclidean version of the HPE framework. 
Iteration-complexities of the latter framework are established in \cite{monteiro2010complexity} (see also \cite{monteiro2011complexity}).
Generalizations of the HPE framework to the non-Euclidean setting are studied in \cite{MJR,Oliver,Sol-Sv:hy.breg}.
Applications of the HPE framework can be found for example in \cite{YHe2,YHe1,Maicon,monteiro2010complexity,monteiro2011complexity,monteiro2010iteration}.
 \\[2mm]
{\bf Organization of the paper.}  Subsection~\ref{sec:bas}  presents our notation and basic results. Section~\ref{subsec:Admm1} describes the  proximal ADMM and present its pointwise and ergodic convergence rate results whose proofs are given in Section \ref{sec:proximal ADMM_proof}. Section~\ref{sec:smhpe} is devoted to the study of a non-Euclidean  HPE framework. This section is divided into two subsections, Subsection~\ref{sec:NE-HPE} introduces the framework and presents  its convergence rate bounds whose proofs are given in Subsection~\ref{sec:hpe_Analysis}.

\subsection{Notation and basic results}
\label{sec:bas}

This subsection presents some definitions, notation and basic results used in this paper.

Let  $\V$ be a finite-dimensional 
real vector space with inner product and associated norm denoted by $\inner{\cdot}{\cdot}_\V$ and $\|\cdot\|_{\V}$, respectively.
For a given  self-adjoint positive semidefinite linear operator $A:\V\to \V$, 
 the seminorm induced by $A$ on $\V$ is defined by $\|\cdot\|_{\V,A}= \langle A (\cdot), \cdot\rangle_{\V} ^{1/2}$.
For an arbitrary seminorm $\|\cdot\|$ on $\V$,  its dual (extended) seminorm, denoted by $\|\cdot\|^*$, is defined as
 $\|\cdot\|^*:=\sup\{ \inner{\cdot}{v}_{\V}:\|v\|\leq 1\}$.  
 
 
 The following result gives some properties of $ \|\cdot\|_{\V,A}^*$  whose proof is omitted.

\begin{proposition}\label{propdualnorm} 
Let $A:\V\to \V$ be a self-adjoint positive semidefinite linear operator. Then,  $\dom  \|\cdot\|_{\V,A}^*=\mbox{Im}\;(A)$ and
$\| A(\cdot)\|_{\V,A}^*=\|\cdot\|_{\V,A}$.
\end{proposition}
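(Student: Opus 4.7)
The plan is to first establish a Cauchy--Schwarz-type inequality for the positive semidefinite bilinear form $(u,v)\mapsto \langle Au,v\rangle_{\V}$, then use it to handle both assertions simultaneously. More precisely, for any $w,v\in\V$, I would apply the standard trick of expanding $\langle A(w+tv),w+tv\rangle_{\V}\ge 0$ in $t\in\R$ and optimizing to obtain
\[
|\langle Aw,v\rangle_{\V}|\;\le\; \|w\|_{\V,A}\,\|v\|_{\V,A}.
\]
This inequality is the core tool: restricting to $\|v\|_{\V,A}\le 1$ yields $\|Aw\|_{\V,A}^{*}\le \|w\|_{\V,A}$, which gives one half of the second claim and also shows that $\mathrm{Im}(A)\subseteq \dom\|\cdot\|_{\V,A}^{*}$.

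Next, to get the reverse inequality $\|Aw\|_{\V,A}^{*}\ge \|w\|_{\V,A}$, I would split on whether $\|w\|_{\V,A}=0$ (in which case both sides vanish, using that $A$ is positive semidefinite to conclude $Aw=0$ from $\langle Aw,w\rangle_\V=0$), or $\|w\|_{\V,A}>0$, in which case I plug the explicit candidate $v=w/\|w\|_{\V,A}$ into the defining supremum and compute $\langle Aw,v\rangle_{\V}=\|w\|_{\V,A}$. This establishes $\|A(\cdot)\|_{\V,A}^{*}=\|\cdot\|_{\V,A}$.

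For the domain statement, it remains to show $\dom\|\cdot\|_{\V,A}^{*}\subseteq \mathrm{Im}(A)$. I would use the orthogonal decomposition $\V=\mathrm{Im}(A)\oplus\ker(A)$, valid because $A$ is self-adjoint. Given $u\notin\mathrm{Im}(A)$, decompose $u=u_1+u_2$ with $u_1\in\mathrm{Im}(A)$ and $0\ne u_2\in\ker(A)$. Since $Au_2=0$, the vector $tu_2$ has $\|tu_2\|_{\V,A}=0\le 1$ for every $t\in\R$, yet $\langle u,tu_2\rangle_{\V}=t\|u_2\|_{\V}^{2}$ is unbounded as $t\to\infty$, so $\|u\|_{\V,A}^{*}=+\infty$, i.e., $u\notin\dom\|\cdot\|_{\V,A}^{*}$.

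The only mildly delicate step is the Cauchy--Schwarz inequality for the degenerate form $\langle A\cdot,\cdot\rangle_{\V}$: one cannot divide by $\|v\|_{\V,A}$ when it vanishes, so I would phrase the argument by minimizing a nonnegative quadratic in $t$, which works uniformly regardless of whether $\|v\|_{\V,A}$ is zero or not (the case $\|v\|_{\V,A}=0$ forcing $\langle Aw,v\rangle_{\V}=0$ via $|\langle Aw,v\rangle_{\V}|\le \|w\|_{\V,A}\cdot 0$). Beyond this mild care, the remainder of the proof consists of direct algebraic verifications.
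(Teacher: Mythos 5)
Your proof is correct and complete: the degenerate Cauchy--Schwarz inequality obtained by minimizing the nonnegative quadratic $t\mapsto\langle A(w+tv),w+tv\rangle_{\V}$ handles both the upper bound $\|Aw\|_{\V,A}^{*}\le\|w\|_{\V,A}$ and the case $\|v\|_{\V,A}=0$, the normalized test vector gives the reverse inequality, and the orthogonal splitting $\V=\mathrm{Im}(A)\oplus\ker(A)$ (valid since $A$ is self-adjoint on a finite-dimensional space) correctly shows that any $u$ with a nonzero kernel component has $\|u\|_{\V,A}^{*}=+\infty$. The paper explicitly omits the proof of this proposition, so there is no argument to compare against; yours is the standard one and fills the gap properly.
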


Given a set-valued operator $T:\V\tos \V$,
its domain and graph  are defined as  
 $\mbox{Dom}\,T:=\{v\in \V\,:\, T(v)\neq \emptyset\}$ and 
$
Gr(T)=\{ (v_1,v_2)\in \V\times \V\;|\; v_2 \in T(v_1)\},
$
respectively, and  its {inverse} operator $T^{-1}:\V\tos \V$ is given by
$T^{-1}(v_2):=\{v_1\;:\; v_2\in T(v_1)\}$.
The operator $T$ is said to be   monotone if 
\[
\inner{u_1-v_1}{u_2-v_2}\geq 0\quad \forall \;  (u_1,u_2), (v_1,v_2) \,\in \, Gr(T).
\]
Moreover, $T$ is maximal monotone if it is monotone and there is no other monotone operator $S$ such that $Gr(T)\subset Gr(S)$.
Given a scalar $\varepsilon\geq0$, the 
 {$\varepsilon$-enlargement} $T^{[\varepsilon]}:\V\tos \V$
 of a monotone operator $T:\V\tos \V$ is defined as
\begin{align}
\label{eq:def.eps}
 T^{[\varepsilon]}(v)
 :=\{v'\in \V\,:\,\inner{v'-v_2}{v-v_1}\geq -\varepsilon,\;\;\forall (v_1,v_2)\in Gr(T)\} \quad \forall v \in \V.
\end{align}

Recall that the 
{$\varepsilon$-subdifferential} of a 
 convex function $f:\V\to  [-\infty,\infty]$
is defined by
$\partial_{\varepsilon}f(v):=\{u\in \V\,:\,f(v')\geq f(v)+\inner{u}{v'-v}-\varepsilon\;\;\forall v'\in \V\}$ for every $v\in \V$.
When $\varepsilon=0$, then $\partial_0 f(x)$ 
is denoted by $\partial f(x)$
and is called the {subdifferential} of $f$ at $x$.
The operator $\partial f$ is trivially monotone if $f$ is proper.
If $f$ is a proper lower semi-continuous convex function, then
$\partial f$ is maximal monotone~\cite{Rockafellar}.
The domain of $f$ is denoted by $\dom f$ and 
the conjugate of $f$ is the function
$f^*:\V \to [-\infty,\infty]$ defined as
\[
f^*(v) = \sup_{z \in \V} \left(\inner{v}{z} - f(z) \right)\quad \forall v \in \V.
\]
\section{Proximal ADMM and its  convergence rate}\label{subsec:Admm1}

In this section, we recall the proximal ADMM for solving  \eqref{optl} and present pointwise and ergodic convergence rate results.  The pointwise convergence rate considers the stepsize parameter in the open interval  $(0,(\sqrt{5}+1)/2)$ while the ergodic one  includes also the stepsize $(\sqrt{5}+1)/2$.

Throughout this section, we assume that:
\begin{itemize}
  \item[\bf A1)] the
  problem \eqref{optl} has an optimal solution $(s^*,y^*)$ and an associated Lagrange multiplier $x^*$, or equivalently,
   the inclusion
\begin{equation} \label{FAB}
0\in T(s,y,x) := \left[ \begin{array}{c}  \partial g(s)- D^*{x}\\ \partial f(y)- C^* {x} \\ Cy+Ds-c
\end{array} \right] 
\end{equation}
has a solution $(s^*,y^*,x^*)$;
  \item[\bf A2)]  there exists  $x\in \X$
  such that $(C^*x,D^*x) \in \ri(\dom f^*) \times \ri(\dom g^*)$.
\end{itemize}

Next we  state the   proximal ADMM  for solving  the problem \eqref{optl}.
\\
\hrule
\noindent
\\
{\bf Proximal ADMM}
\\
\hrule
\begin{itemize}
\item[(0)] Let an initial point $(s_0,y_0,x_0) \in \mathcal{S}\times \Y\times  \X$, a penalty parameter $\beta>0$, a setpsize $\theta>0$, and
self-adjoint positive semidefinite linear operators  $H: \mathcal{S} \to \mathcal{S}$ and  $G: \Y \to \Y$  be given, and set $k=1$;
\item[(1)]    compute an optimal solution $s_k \in \Sf$  of the subproblem
\begin{equation} \label{def:tsk-admm}
\min_{s \in \mathcal{S}} \left \{ g(s) - \inner{ D^*{x}_{k-1}}{s}_\mathcal{S} +
\frac{\beta}{2} \| C y_{k-1} + D s - c \|^2_\X+\frac{1}{2}\|s- s_{k-1}\|_{\mathcal{S},H}^2 \right\}
\end{equation}
and compute an optimal solution $y_k\in \Y$ of the subproblem
\begin{equation} \label{def:tyk-admm}
\min_{y \in \Y} \left \{ f(y) - \inner{ C^*{x}_{k-1}}{y}_\Y +
\frac{\beta}{2} \| C y + D s_k - c \|^2_\X +\frac{1}{2}\|y- y_{k-1}\|_{\Y,G}^2\right\};
\end{equation}

\item[(2)] set 
\begin{equation}\label{admm:eqxk}
x_k = x_{k-1}-\theta\beta\left[Cy_k+Ds_k-c\right]
\end{equation}
and $k \leftarrow k+1$, and go to step~(1).
\end{itemize}
{\bf end}\\
\hrule
\noindent
\\ 

The proximal ADMM  has different features depending on the choice of the operators $H$ and $G$. 
 For instance, by taking $(H,G)=(0,0)$ and $(H,G)=(\tau I-\beta D^{*}D$,0) with  $\tau>0$,
it reduces to the standard ADMM and the linearized ADMM, respectively.
The latter method   is related to  the split inexact Uzawa method (see, e.g.,  \cite{HeLinear,Zhang2010}) and 
it basically consists of linearizing the quadratic term $(1/2)\| C y_{k-1} + D s - c \|^2_\X$ in the standard ADMM and adding a proximal term $(1/2)\|s-s_{k-1}\|^2_{\mathcal{S},H}$. In many applications,  the  corresponding subproblem~\eqref{def:tsk-admm} for the linearized ADMM is  much easier to solve or  even has a closed-form solution (see   \cite{HeLinear,Wang2012,Yang_linearizedaugmented} for more details). 
We also mention that depending on the structure of problem~\eqref{optl}, other choices of  $H$ and $G$ may be recommended; see, for instance,  Section~1.1 of \cite{Deng1}.
It is worth pointing out that the condition {\bf A2} is used only to  ensure that the subproblems of ADMM as well as some variants  have solutions,
see for example \cite[Proposition~7.2]{monteiro2010iteration} and \cite[comments on page~16]{MJR}. In particular, under this assumption it is possible to show that the subproblems \eqref{def:tsk-admm} and \eqref{def:tyk-admm} have solutions.

The next two results present pointwise and ergodic convergence rate bounds for the proximal ADMM
under the assumption that $\theta\in (0,(\sqrt{5}+1)/2)$ and $\theta\in (0,(\sqrt{5}+1)/2]$, respectively.
Their statements use the quantities $d_0$, $\tau_\theta$ and $\sigma_\theta$ defined as

\begin{align}
d_0&:=d_0(\beta,\theta)=\inf_{(s,y,x) \in T^{-1}(0)} \left\{\frac{1}{2}\|s_0-s\|_{\mathcal{S},H}^2+\frac{1}{2}\|y_0-y\|_{\Y,(G+\beta C^*C)}^2+\frac{1}{2\beta\theta}\|x_0-x\|_\X^2\right\}\label{def:d0admm},\\[2mm]
 \sigma_\theta&:=\frac{3\theta^2-7\theta+5+\sqrt{(3\theta^2-7\theta+5)^2-4(2-\theta)(3-\theta)(\theta-1)^2}}{2(3-\theta)},\label{def:sigma}\\[2mm]
 \tau_\theta&:=4\max\left\{\frac{1}{\sqrt{\theta}},\frac{\sqrt{\theta}}{2-\theta}\right\}\label{def:tau}.
\end{align}

It is easy to verify that $\sigma_\theta \in (0,1)$ whenever $\theta \in  (0,(\sqrt{5}+1)/2)$ and
$\sigma_\theta=1$ when $\theta=(\sqrt{5}+1)/2$.

\begin{theorem} ({\bf Pointwise convergence of the proximal ADMM}) \label{th:pointwise} 
Consider the sequence $\{(s_k,y_k,x_k)\}$  generated by the proximal ADMM  with $\theta\in (0,(\sqrt{5}+1)/2)$,
and let $\{\tx_k\}$  be defined as 
\begin{equation}\label{xtilde}
\tilde{x}_{k}={x}_{k-1}-\beta(Cy_{k-1}+Ds_k-c).
\end{equation}
Then, for every $k\in\mathbb{N}$, 
\begin{equation}\label{eq:th_incADMMtheta1} 
\left( 
\begin{array}{c} 
H (s_{k-1}-s_k)\\[1mm]  
(G+\beta C^*C)(y_{k-1}-y_k)\\[1mm]  
\frac{1}{\beta\theta}(x_{k-1}-x_k)
\end{array} 
\right) \in 
\left[ 
\begin{array}{c} 
\partial g(s_k)- D^*\tilde{x}_k\\[1mm]  
\partial f(y_k)- C^*\tilde{x}_k\\[1mm]  
Cy_k+Ds_k-c
\end{array} \right]
\end{equation}
and   there exists $i\leq k$ such that
\[
  \left(\|s_{i-1}-s_i\|_{\mathcal{S},H}^2+\|y_{i-1}-y_i\|_{\Y,(G+\beta C^*C)}^2+\frac{1}{\beta\theta}\norm{x_{i-1}-x_i}_\mathcal{X}^2\right)^{1/2}\leq \frac{2\sqrt{d_0}}{\sqrt{k}} \sqrt{\frac{1+\sigma_\theta +2\tau_\theta }{1-\sigma_\theta}}
\]
where  $d_0,$ $ \sigma_\theta$ and  $\tau_\theta$  are as in  \eqref{def:d0admm},   \eqref{def:sigma} and \eqref{def:tau}, respectively. 
\end{theorem}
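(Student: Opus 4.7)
The plan is to prove the theorem in two stages: first establish the inclusion~\eqref{eq:th_incADMMtheta1} directly from the optimality conditions, and then identify the proximal ADMM as an instance of the non-Euclidean HPE framework of Section~\ref{sec:smhpe} in order to invoke its pointwise complexity bound.

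For the inclusion, I would write down the first-order optimality conditions of the two subproblems~\eqref{def:tsk-admm} and~\eqref{def:tyk-admm}, namely
\begin{align*}
0 &\in \partial g(s_k) - D^*x_{k-1} + \beta D^*(Cy_{k-1}+Ds_k-c) + H(s_k-s_{k-1}),\\
0 &\in \partial f(y_k) - C^*x_{k-1} + \beta C^*(Cy_k+Ds_k-c) + G(y_k-y_{k-1}).
\end{align*}
Substituting the definition~\eqref{xtilde} of $\tilde x_k$ into the first line directly yields $H(s_{k-1}-s_k)\in \partial g(s_k)-D^*\tilde x_k$. For the second line I rewrite $\beta C^*(Cy_k+Ds_k-c) = \beta C^*(Cy_{k-1}+Ds_k-c) + \beta C^*C(y_k-y_{k-1})$, and again use~\eqref{xtilde} to get $(G+\beta C^*C)(y_{k-1}-y_k)\in \partial f(y_k)-C^*\tilde x_k$. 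The third row is immediate from~\eqref{admm:eqxk}. Assumption {\bf A2} guarantees that the two subproblems admit solutions so that the above derivations are legitimate.

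For the pointwise bound, the plan is to view the proximal ADMM on the product space $\mathcal{V}=\mathcal{S}\times\mathcal{Y}\times\mathcal{X}$ equipped with the seminorm
\[
\|(s,y,x)\|^2 \;=\; \|s\|_{\mathcal{S},H}^2 + \|y\|_{\mathcal{Y},G+\beta C^*C}^2 + \tfrac{1}{\beta\theta}\|x\|_{\mathcal{X}}^2,
\]
which arises from a regular distance generating function induced by $H$, $G+\beta C^*C$, and $(\beta\theta)^{-1}I$. The inclusion~\eqref{eq:th_incADMMtheta1} then shows that, if we set $z_k=(s_k,y_k,x_k)$ and $\tilde z_k=(s_k,y_k,\tilde x_k)$, the residual appearing on its left is exactly the ``HPE residual'' $v_k$ belonging to $T(\tilde z_k)$, and moreover $v_k$ is a linear combination of the successive differences $z_{k-1}-z_k$ and $\tilde x_k-x_k$. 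The main algebraic effort will be to verify the HPE relative error inequality
\[
\|v_k\|^{*2} + \|\tilde z_k - z_k\|^2 \;\le\; \sigma_\theta^2 \|z_{k-1}-z_k\|^2,
\]
or an equivalent descent estimate, with $\sigma_\theta$ as in~\eqref{def:sigma}; this is the step I expect to be the main obstacle, since $\sigma_\theta$ is precisely the root of the quadratic that appears when one balances the $\theta$-dependent cross terms arising from $\tilde x_k-x_k=(1-\theta)\beta(Cy_k+Ds_k-c)+\beta C(y_{k-1}-y_k)$ against $\|z_{k-1}-z_k\|^2$.

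Once the HPE error bound is in place, I would invoke the non-Euclidean HPE pointwise complexity result of Subsection~\ref{sec:NE-HPE}. That theorem provides a bound of the form $\min_{i\le k}\|z_{i-1}-z_i\|^2 \le C(\sigma_\theta,\tau_\theta)\, d_0/k$, where $d_0$ is the initial Bregman distance to the solution set, which under our choice of seminorm coincides with~\eqref{def:d0admm}. Unwinding the constants yields the $2\sqrt{d_0/k}\sqrt{(1+\sigma_\theta+2\tau_\theta)/(1-\sigma_\theta)}$ bound (the factor $1-\sigma_\theta$ in the denominator explains why $\theta$ must lie strictly below $(\sqrt 5+1)/2$, where $\sigma_\theta$ reaches $1$). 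Combining this with the inclusion derived in the first stage completes the proof.
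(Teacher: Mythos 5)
Your first stage (the inclusion) and your overall strategy (recast the proximal ADMM as an instance of the NE-HPE framework with the Bregman distance built from $H$, $G+\beta C^*C$ and $(\beta\theta)^{-1}I$, then invoke the pointwise HPE bound) coincide with the paper's route. The gap is in the step you yourself flag as the main obstacle: you never verify the HPE error condition, and the inequality you propose to verify, namely $\|v_k\|^{*2}+\|\tilde z_k-z_k\|^2\le\sigma_\theta^2\|z_{k-1}-z_k\|^2$ (or a single-iteration descent estimate comparing $(dw)_{z_k}(\tilde z_k)$ to $\sigma_\theta (dw)_{z_{k-1}}(\tilde z_k)$), does not hold for the proximal ADMM across the full range $\theta\in(0,(\sqrt5+1)/2)$. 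The obstruction is the cross term $\langle C\Delta y_k,\Delta x_k\rangle_\X$: it cannot be absorbed within a single iteration. The paper's error criterion \eqref{breg-cond1} deliberately carries an auxiliary nonnegative sequence $\{\eta_k\}$, chosen in \eqref{eta} as a combination of $\|\Delta x_k\|_\X^2$ and $\|\Delta y_k\|_{\Y,G}^2$, and the verification (Theorem~\ref{th:admm_hpe}) telescopes the cross term across \emph{two consecutive iterations} via the monotonicity of $\partial f$ (Lemma~\ref{lem:deltak}(b)), reducing matters to the positive semidefiniteness of the $2\times2$ matrix $M_\theta(\sigma_\theta)$ of Lemma~\ref{pro:sigma}. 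Your sketch contains none of this machinery, so the claimed relative-error inequality is unsubstantiated and, in the stated single-iteration form, false in general.

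A second, related omission: you give no mechanism by which $\tau_\theta$ enters the final constant. In the paper it enters exclusively through the initialization $\eta_0=\tau_\theta d_0$, which is forced by a separate estimate for the first iteration (Lemma~\ref{lem:deltak}(a)) where the two-iteration coupling is unavailable; the pointwise HPE bound then reads $\frac{2}{\sqrt k}\sqrt{((1+\sigma_\theta)d_0+2\eta_0)/(1-\sigma_\theta)}$, which is exactly where the $2\tau_\theta$ in the numerator comes from. Without the $\eta_k$ device and the $k=1$ analysis, you cannot recover the stated constant $\sqrt{(1+\sigma_\theta+2\tau_\theta)/(1-\sigma_\theta)}$. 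To repair the proposal you would need to (i) adopt the error criterion with the $\eta_k$ terms, (ii) prove the cross-iteration inequality from monotonicity of $\partial f$, (iii) check that $M_\theta(\sigma_\theta)\succeq0$, and (iv) handle $k=1$ separately to fix $\eta_0$.
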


In contrast to the pointwise convergence rate result stated above, the ergodic convergence rate result stated below holds for the extreme case in
which $\theta=(\sqrt{5}+1)/2$.

\begin{theorem} {\bf (Ergodic convergence of the proximal ADMM)}\label{th:ergodicproximal ADMM}
Consider the sequence $\{(s_k,y_k,x_k)\}$  generated by the proximal ADMM with  $\theta\in (0,(\sqrt{5}+1)/2]$, and let $\{\tx_k\}$  be  as in  \eqref{xtilde}.
Moreover, consider the ergodic sequences
$\{(s^a_k,y^a_k,x^a_k,\tx^a_k)\}$ and 
$\{\varepsilon_{k}^a\}$ defined by
\begin{equation}\label{eq:jase12}
 (s_k^a,y_k^a,x_k^a,\tx_k^a):=\frac1k\sum_{i=1}^k\left( s_i, y_i,x_i ,\tx_i \right), \quad
(\varepsilon^a_{k,s},\varepsilon^a_{k,y})= \frac{1}{{k}}\sum_{i=1}^k\left(\inner{r_{i,s}}{s_i-s_k^a}, \inner{r_{i,y}}{y_i-y_k^a}\right). 
\end{equation}
where 
\begin{equation}\label{eq:291} 
(r_{i,s},r_{i,y})=\left(H(s_{i-1}-s_{i}),(G+\beta C^*C)(y_{i-1}-y_{i})\right).
\end{equation}
Then, for every $k\in\mathbb{N}$,
\begin{equation}\label{eq:th_incADMMtheta<1} 
\left( 
\begin{array}{c} 
H (s_{k-1}^a-s_k^a)\\[1mm]  
(G+\beta C^*C)(y_{k-1}^a-y_k^a)\\[1mm]  
\frac{1}{\beta\theta}(x_{k-1}^a-x_k^a)
\end{array} 
\right) \in 
\left[ 
\begin{array}{c} 
\partial g_{\varepsilon^a_{k,s}}(s_k^a)- D^*\tilde{x}_k^a\\[1mm]  
\partial f_{\varepsilon^a_{k,y}}(y_k^a)- C^*\tilde{x}_k^a\\[1mm]  
Cy_k^a+Ds_k^a-c
\end{array} \right]
\end{equation}  
\[
  \left(\|s_{k-1}^a-s_k^a\|_{\mathcal{S},H}^2+\|y_{k-1}^a-y_k^a\|_{\Y,(G+\beta C^*C)}^2+\frac{1}{\beta\theta}\norm{x_{k-1}^a-x_k^a}_\mathcal{X}^2\right)^{1/2}\le \frac{2\sqrt{2(1+\tau_\theta) d_0}}{k}
    \]
 and
 \[
 \varepsilon^a_{k,s} + \varepsilon^a_{k,y} \leq \frac{ 3(1+\tau_\theta)[  3\theta^2+4\sigma_\theta(\theta^2+\theta+1)] d_{0} }{\theta^2k}
\]
where   $d_0,$  $ \sigma_\theta$ and $\tau_\theta$ are as in  \eqref{def:d0admm}, \eqref{def:sigma} and \eqref{def:tau}, respectively. 
\end{theorem}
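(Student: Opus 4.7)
The plan is to identify each iteration of the proximal ADMM with one step of the non-Euclidean HPE framework developed in Section~\ref{sec:smhpe}, applied to the monotone operator $T$ from \eqref{FAB} with respect to the block seminorm induced by $M := \mathrm{diag}(H,\, G+\beta C^{*}C,\, (\beta\theta)^{-1}I)$ on $\mathcal{S}\times\Y\times\X$. Once this identification is in place, Theorem~\ref{th:ergodicproximal ADMM} follows by applying the ergodic complexity results of the HPE framework (Subsection~\ref{sec:hpe_Analysis}) to the sequence $z_k=(s_k,y_k,x_k)$ with auxiliary ``extragradient'' iterate $\tilde z_k = (s_k,y_k,\tilde x_k)$, using $d_0$ from \eqref{def:d0admm} as the initial $M$-distance to $T^{-1}(0)$.

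The first technical step is to derive the pointwise inclusion \eqref{eq:th_incADMMtheta1}, which is the common foundation for both Theorems~\ref{th:pointwise} and \ref{th:ergodicproximal ADMM}. For the $s$-component, the optimality condition for \eqref{def:tsk-admm} reads $0 \in \partial g(s_k) - D^{*}x_{k-1} + \beta D^{*}(Cy_{k-1}+Ds_k-c) + H(s_k-s_{k-1})$, and the definition \eqref{xtilde} of $\tilde x_k$ is designed precisely so that this rewrites as $H(s_{k-1}-s_k) \in \partial g(s_k) - D^{*}\tilde x_k$. For the $y$-component, the optimality condition for \eqref{def:tyk-admm} gives $0 \in \partial f(y_k) - C^{*}x_{k-1} + \beta C^{*}(Cy_k+Ds_k-c) + G(y_k-y_{k-1})$; adding and subtracting $\beta C^{*}Cy_{k-1}$ converts $C^{*}x_{k-1} - \beta C^{*}(Cy_k+Ds_k-c)$ into $C^{*}\tilde x_k - \beta C^{*}C(y_k-y_{k-1})$, which yields $(G+\beta C^{*}C)(y_{k-1}-y_k) \in \partial f(y_k) - C^{*}\tilde x_k$. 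The $x$-component is simply \eqref{admm:eqxk} rearranged to read $\frac{1}{\beta\theta}(x_{k-1}-x_k) = Cy_k+Ds_k-c$.

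With \eqref{eq:th_incADMMtheta1} in hand, the ergodic inclusion \eqref{eq:th_incADMMtheta<1} follows from the standard transport formula for the $\varepsilon$-subdifferential---namely, if $u_i \in \partial g(s_i)$ for $i=1,\ldots,k$, then $\frac{1}{k}\sum_i u_i \in \partial_{\varepsilon}g(s_k^a)$ with $\varepsilon = \frac{1}{k}\sum_i \langle u_i, s_i-s_k^a\rangle$---applied coordinatewise to the sum of \eqref{eq:th_incADMMtheta1} over $i=1,\ldots,k$, and then identifying the averaged residual with the displayed form involving $s_{k-1}^a-s_k^a$, $y_{k-1}^a-y_k^a$ and $x_{k-1}^a-x_k^a$. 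The two quantitative estimates---the $O(1/k)$ bound on the $M$-norm of the residual and the $O(1/k)$ bound on $\varepsilon^{a}_{k,s}+\varepsilon^{a}_{k,y}$---then follow directly from the ergodic complexity results of Subsection~\ref{sec:hpe_Analysis}. The constants $\sigma_\theta$ and $\tau_\theta$ enter as, respectively, the effective relative-error parameter of the HPE step and an amplification factor reflecting the ``hybrid'' character of the update (only the $x$-coordinate is an extragradient step, while $s$ and $y$ are pure prox steps).

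The main obstacle---and the new contribution of the theorem---is covering the boundary stepsize $\theta = (1+\sqrt{5})/2$, for which $\sigma_\theta = 1$. The pointwise bound in Theorem~\ref{th:pointwise} contains a factor $1/(1-\sigma_\theta)$ and blows up at this boundary, whereas the ergodic estimates in Theorem~\ref{th:ergodicproximal ADMM} contain no such factor. The reason is that the ergodic analysis only needs a summable decrement inequality that continues to hold (with equality) at $\sigma_\theta = 1$. The essential care, then, is to formulate and prove the HPE ergodic convergence results in Subsection~\ref{sec:hpe_Analysis} uniformly in the relative-error parameter $\sigma \in (0,1]$, rather than restricting to strict contractions $\sigma < 1$. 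This uniformity is exactly what permits the extension of the ergodic rate to $\theta = (1+\sqrt{5})/2$.
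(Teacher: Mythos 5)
Your overall architecture matches the paper's: cast the proximal ADMM as an instance of the NE-HPE framework with $z_k=(s_k,y_k,x_k)$, $\tilde z_k=(s_k,y_k,\tilde x_k)$, $\lambda_k=1$, the Bregman distance induced by $Q=\mathrm{diag}(H,G+\beta C^*C,(\beta\theta)^{-1}I)$, then get the ergodic inclusion from the transportation formula and the rates from the HPE ergodic theorem. Your derivation of the pointwise inclusion \eqref{eq:th_incADMMtheta1} from the optimality conditions of \eqref{def:tsk-admm}--\eqref{def:tyk-admm} is also correct and is exactly Lemma~\ref{pr:aux}. However, there are two genuine gaps.

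First, you treat the HPE identification as essentially automatic (``once this identification is in place\dots''), but verifying the relative error condition \eqref{breg-cond1} with $\sigma=\sigma_\theta$ is the bulk of the work and is nontrivial: the quantity $(dw)_{z_k}(\tilde z_k)-\sigma(dw)_{z_{k-1}}(\tilde z_k)$ contains the cross term $\langle C\Delta y_k,\Delta x_k\rangle_\X$ with the wrong sign, and the condition does \emph{not} hold iteration by iteration without the auxiliary nonnegative sequence $\eta_k$ of \eqref{eta}. Controlling the cross term requires using the monotonicity of $\partial f$ across two consecutive iterations (Lemma~\ref{lem:deltak}(b)) and the positive semidefiniteness of the matrix $M_\theta(\sigma_\theta)$ in \eqref{matrixtheta>1} (Lemma~\ref{pro:sigma}(c)); this is where the threshold $(1+\sqrt5)/2$ and the specific value of $\sigma_\theta$ actually come from. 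Your description of $\sigma_\theta$ and $\tau_\theta$ as an ``effective relative-error parameter'' and an ``amplification factor'' gestures at this but does not supply the argument, and the first iteration $k=1$ needs a separate estimate (Lemma~\ref{lem:deltak}(a)) that is the source of the $\tau_\theta$ factor in the final bounds.

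Second, and more importantly for the claimed novelty, your explanation of why $\theta=(1+\sqrt5)/2$ (i.e.\ $\sigma_\theta=1$) is covered is not correct as stated. The HPE ergodic bound on $\varepsilon_k^a$ in Theorem~\ref{th:ergHPE} is \emph{not} uniform in $\sigma\in(0,1]$: it involves $\rho_k=\max_{i\le k}(dw)_{z_{i-1}}(\tilde z_i)$, and the general framework only bounds $\rho_k$ when $\sigma<1$ (giving a $1/(1-\sigma)$ factor) or when $\Dom T$ is bounded --- neither of which applies here at the boundary stepsize. The missing ingredient is an ADMM-specific bound on $(dw)_{z_{k-1}}(\tilde z_k)$, namely Lemma~\ref{lem:aux3}, which uses relation \eqref{aux.3} to express $\tilde x_k-x_{k-1}$ in terms of $\Delta y_k$ and $\Delta x_k$ and then the Fej\'er-type monotonicity of $(dw)_{z_k}(z^*)+\eta_k$ to obtain $\rho_k\le 4(1+\tau_\theta)(\theta^2+\theta+1)d_0/\theta^2$ independently of $\sigma_\theta$. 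Without this step the bound on $\varepsilon_{k,s}^a+\varepsilon_{k,y}^a$ at $\theta=(1+\sqrt5)/2$ is unproven, so the part of the theorem that constitutes its main contribution is exactly the part your outline leaves open.
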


The proofs of Theorems~\ref{th:pointwise} and \ref{th:ergodicproximal ADMM}  will be presented  in Section~\ref{sec:proximal ADMM_proof}.
For this, we first study a non-Euclidean HPE framework  from which the proximal ADMM is a special instance.

\section
{A non-Euclidean HPE framework}
\label{sec:smhpe}
This section describes and derives convergence rate bounds for a non-Euclidean HPE framework for solving monotone inclusion problems.
Subsection~\ref{sec:NE-HPE} describes the non-Euclidean HPE framework and
 its corresponding pointwise and ergodic convergence rate bounds.  Subsection~\ref{sec:hpe_Analysis} gives the
proofs for the two convergence rate results stated in Subsection~\ref{sec:NE-HPE}.

\subsection{A non-Euclidean HPE framework and its convergence rate}\label{sec:NE-HPE}

Let  $\Z$ be finite-dimensional inner product real vector space.
We start by introducing the definition of a distance generating function and its corresponding
Bregman distance adopted in this paper.

\begin{definition}\label{defw0}
A proper lower semi-continuous convex function $w : \Z  \to (-\infty,\infty]$ is called a distance generating function
if   $\inte(\dom w) = \Dom\, \partial w \neq \emptyset$ and $w$ is continuously differentiable on this interior.
Moreover, $w$ induces the Bregman distance $dw: \Z \times \inte(\dom w) \to \mathbb{R}$ defined as
\begin{equation}\label{def_d}
(dw)(z';z) := w(z')-w(z)-\langle \nabla w(z),z'-z\rangle_{\Z}  \quad \forall (z', z) \in \Z \times \inte(\dom w).
\end{equation}
\end{definition}
For simplicity, for every $z \in \inte(\dom w)$,  the function $(dw)(\,\cdot\,;z)$ will be denoted by $(dw)_{z}$ so that
\[
(dw)_{z}(z')=(dw)(z';z) \quad \forall (z',z) \in \Z \times \inte(\dom w).
\]
The following useful identities follow straightforwardly from \eqref{def_d}:
\begin{align}
\nabla (dw)_{z}(z') &= - \nabla (dw)_{z'}(z) = \nabla w(z') - \nabla w(z) \quad \forall z, z' \in \inte(\dom w), \label{grad-d} \\
\label{equacao_taylor}
(dw)_{v}(z') - (dw)_{v}(z) &= \langle \nabla (dw)_{v}(z), z'-z\rangle_{\Z} + (dw)_{z}(z') \quad \forall z' \in \Z, \, \forall v  , z \in \inte(\dom w).
\end{align}

Our analysis of the non-Euclidean HPE framework  requires  the distance generating function to be regular
with respect to a seminorm according to the following definition.

\begin{definition}\label{def:assu}
Let distance generating function $w:\Z \to [-\infty,\infty]$, seminorm $\|\cdot\|$ in $\Z$ and convex set $Z \subset \inte(\dom w)$
be given.
For given positive constants $m$ and $M$,   $w$ is said to be $(m,M)$-regular with respect to $(Z,\|\cdot\|)$ if 
\begin{align}\label{strongly}
 (dw)_{z}(z')\geq \frac{{m}}{2}\|z-z'\|^2 \quad \forall  z,z' \in  Z,
\end{align}
\begin{equation}\label{a1}
\|\nabla w(z)-\nabla w(z')\|^*\leq {M}\|z-z'\| \quad \forall z, z' \in   Z.
\end{equation}
\end{definition}

We now make some remarks about the class of regular distance generating functions as in Definition \ref{def:assu},
which was first introduced in \cite{MJR}. First, if the seminorm in Definition \ref{def:assu}
is a norm, then \eqref{strongly}  implies that
$w$ is strongly convex, in which case the corresponding $dw$ is said to be nondegenerate.
However, since  $\|\cdot\|$ is not assumed to be a norm, a regular distance generating function $w$ does not
need to be strongly convex, or equivalently, $dw$ can be degenerate. Second, some examples of $(m,M)$-regular distance generating
functions can be found in \cite[Example 2.3]{MJR}. For the purpose of analyzing the proximal ADMM, we
make use of the distance generating function given by $w(\cdot)=(1/2)\|\cdot\|_{\Z,Q}^2$ where $Q$ is a self-adjoint positive semidefinite
linear operator. This $w$ can be easily shown to be $(1,1)$-regular  with respect to $(\Z,\|\cdot\|_{\Z,Q})$.
Third, if  $w:\Z \to [-\infty,\infty]$ is  $(m,M)$-regular with respect to $(Z,\|\cdot\|)$,  then
\begin{equation}\label{lipsc}
\frac{m}{2}\|z-z'\|^2\leq (d{w})_{z}(z')  \leq \frac{M}{2}\|z-z'\|^2  \quad \forall z,z' \in Z.
\end{equation}

Throughout this section, we assume that $w:\Z \to [-\infty,\infty]$ is an $(m,M)$-regular distance generating function 
with respect to $(Z,\|\cdot\|)$ where $Z \subset \inte(\dom\, w)$ is a closed convex set and 
$\|\cdot\|$ is a seminorm in~$\Z$.
Our problem of interest in this section is the monotone inclusion problem (MIP)
\begin{align}\label{eq:inc.p}
 0\in T(z)
\end{align}
where 
$T:\Z\tos \Z$ is a maximal monotone operator and the following conditions hold:
\begin{itemize}
\item[\bf B1)] $\Dom\,T\subset Z$;
\item[\bf B2)]  the solution set $T^{-1}(0)$ of~\eqref{eq:inc.p} is nonempty. 
\end{itemize}

We now state a non-Euclidean HPE (NE-HPE) framework  for solving \eqref{eq:inc.p}.

\vgap
\vgap

\noindent
\fbox{
\begin{minipage}[h]{6.4 in}
{\bf NE-HPE framework for solving \eqref{eq:inc.p}}.
\begin{itemize}
\item[(0)] Let $z_0 \in Z$, $\eta_0 \in \R_{+}$ and  $\sigma \in [0, 1]$ be given, and set $k=1$;
\item[(1)] choose $\lambda_k>0$ and find $(\tilde{z}_k, z_k, \varepsilon_k,\eta_k) \in Z \times Z \times \mathbb{R}_{+}\times \mathbb{R}_{+}$   such that 
       \begin{align}
& r_k:= \frac{1}{\lambda_k} \nabla (dw)_{z_k}(z_{k-1})  \in T^{[\varepsilon_k]}(\tz_k), \label{breg-subpro} \\      
& (dw)_{z_k}({\tz}_k) + \lambda_k\varepsilon_k +\eta_k\leq \sigma (dw)_{z_{k-1}}({\tz}_k)+\eta_{k-1}; \label{breg-cond1}
\end{align}
       
\item[(2)] set $k\leftarrow k+1$ and go to step 1.
\end{itemize}
\noindent
{\bf end}
\end{minipage}
}

\vgap
\vgap

We now make some remarks about the  NE-HPE framework. First, \cite{MJR} studies an NE-HPE framework based on a regular distance
generating function $w$ for solving a monotone inclusion problem consisting of the sum of $T$ and a $\mu$-monotone operator $S$
with respect to $w$. The latter notion implies strong monotonicity of $S$ when $dw$ is nondegenerate (see \cite[Assumption (A1)]{MJR}).
Second, the NE-HPE does not specify
how to find $\lambda_k$ and $(\tilde{z}_k, z_k, \varepsilon_k)$ satisfying (\ref{breg-subpro}) and (\ref{breg-cond1}).
The particular 
scheme for computing $\lambda_k$ and $(\tilde{z}_k, z_k, \varepsilon_k)$ will depend on the instance of the framework under consideration
and the properties of the operator $T$.
Third, if $w$ is strongly convex on $Z$ and $\sigma= 0$, then (\ref{breg-cond1}) implies that $\varepsilon_k= 0$
and $z_k = \tilde z_k$ for every~$k$, and
hence that $r_k \in T(z_k)$ in view of \eqref{breg-subpro}.
Therefore, the HPE error conditions \eqref{breg-subpro}-\eqref{breg-cond1} can be viewed as a relaxation of an iteration of the exact non-Euclidean proximal point method,
namely,
\[
0 \in \frac{1}{\lambda_k} \nabla (dw)_{z_{k-1}}(z_{k})  +  T({z}_k).
\]
Fourth, if $w$ is strongly convex on $Z$, then it can be shown that the above inclusion has a unique solution $z_k$,
and hence that, for any given $\lambda_k>0$, there exists a triple
 $(\tz_k,z_k,\varepsilon_k)$ of the form $(z_k,z_k,0)$ satisfying
(\ref{breg-subpro})-(\ref{breg-cond1}) with $\sigma=0$. Clearly, computing the triple
 in this (exact) manner is expensive, and hence computation of (inexact) quadruples satisfying the HPE (relative) error conditions with $\sigma>0$
is more computationally appealing.

We end this subsection by presenting pointwise and ergodic convergence rate results for the NE-HPE framework whose
proofs are given in the next subsection.
Their statements use the quantity $(dw)_0$ defined as 
\begin{equation}\label{d0HPE}
(dw)_0 = \inf \{ (dw)_{z_0}(z^*) : z^* \in T^{-1}(0)\}.
\end{equation}

\begin{theorem} ({\bf Pointwise convergence of the NE-HPE}) \label{th:pointwiseHPE}
Consider the sequence $\{(r_k,\varepsilon_k,\lambda_k)\}$  generated by the NE-HPE framework with $\sigma <1 $.
 Then, for every $k \ge 1$, $r_k \in T^{[\varepsilon_k]}(\tz_k)$ and  the following statements hold:
  \begin{itemize}
\item[(a)]  if $\underline{\lambda}:=\inf_{j \ge 1} \lambda_j>0$,
then there exists $i\leq k$ such that
  \[
  \norm{r_i}^* \leq  \frac{2M}{\sqrt{m}} 
    \sqrt{\frac{(1+\sigma)(dw)_0+2\eta_0}{1-\sigma}
    \, \left( \frac{\lambda_i^{-1}}
   { \sum_{j=1}^k \lambda_j}\right)}
\leq
\frac{2M}{\underline\lambda\sqrt{m k}} \sqrt{\frac{(1+\sigma)(dw)_0+2\eta_0}{1-\sigma}}
\]
\[
  \varepsilon_i\leq
\frac{(1+\sigma) (dw)_0+2\eta_0}{(1-\sigma)\sum_{i=1}^k\lambda_i}
\leq
\frac{(1+\sigma) (dw)_0+2\eta_0}{(1-\sigma)\underline
    \lambda k};
  \]
  \item[(b)] there exists an index $i\leq k$ such that
  \begin{equation*}
  \norm{r_i}^*\leq \frac{2M}{\sqrt{m}} 
    \sqrt{\frac{(1+\sigma)(dw)_0+2\eta_0}{1-\sigma}
    \, \left( \frac{1}
   { \sum_{j=1}^k \lambda_j^2}\right)},
  \quad \quad \quad
  \varepsilon_i\leq\frac{[(1+\sigma)(dw)_0+2\eta_0]\lambda_i}{(1-\sigma)\sum_{j=1}^k \lambda_j^2},
\end{equation*}
\end{itemize}
where $(dw)_0$ is as defined in \eqref{d0HPE}.
\end{theorem}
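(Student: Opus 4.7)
The plan is to derive a single Lyapunov-type telescoping inequality for the quantity $(dw)_{z_k}(z^*)+\eta_k$ (with $z^*\in T^{-1}(0)$ arbitrary), and then translate the accumulated Bregman decay into pointwise estimates on $r_k$ and $\varepsilon_k$ via the regularity bounds \eqref{strongly}--\eqref{a1}.

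First, I apply the three-point identity \eqref{equacao_taylor} with $(v,z)=(z_{k-1},z_k)$ once with $z'=\tilde z_k$ and once with $z'=z^*$, and subtract; using $\nabla(dw)_{z_{k-1}}(z_k)=-\lambda_k r_k$, which follows from \eqref{grad-d} and \eqref{breg-subpro}, this produces
\[
\lambda_k\langle r_k,\tilde z_k-z^*\rangle = -(dw)_{z_{k-1}}(\tilde z_k)+(dw)_{z_{k-1}}(z^*)+(dw)_{z_k}(\tilde z_k)-(dw)_{z_k}(z^*).
\]
The $\varepsilon$-enlargement inequality $\langle r_k,\tilde z_k-z^*\rangle\ge-\varepsilon_k$ (valid since $r_k\in T^{[\varepsilon_k]}(\tilde z_k)$ and $0\in T(z^*)$), combined with \eqref{breg-cond1} to eliminate $(dw)_{z_k}(\tilde z_k)+\lambda_k\varepsilon_k$, then yields the one-step contraction
\[
(dw)_{z_{k-1}}(z^*)+\eta_{k-1}-\bigl[(dw)_{z_k}(z^*)+\eta_k\bigr]\;\ge\;(1-\sigma)(dw)_{z_{k-1}}(\tilde z_k).
\]
Telescoping over $k$, dropping the nonnegative tail, and taking the infimum over $z^*\in T^{-1}(0)$ gives the master estimate $\sum_{i=1}^{k}(dw)_{z_{i-1}}(\tilde z_i)\le[(dw)_0+\eta_0]/(1-\sigma)$.

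With this master estimate in hand, the summed error bounds follow quickly. Summing \eqref{breg-cond1} after discarding $(dw)_{z_k}(\tilde z_k)\ge 0$ yields $\sum_i\lambda_i\varepsilon_i\le[\sigma(dw)_0+\eta_0]/(1-\sigma)\le C$, where $C:=[(1+\sigma)(dw)_0+2\eta_0]/(1-\sigma)$. For $r_k$, the identity $\lambda_k r_k=\nabla w(z_{k-1})-\nabla w(z_k)$, a triangle-inequality split through $\nabla w(\tilde z_k)$, and the regularity bounds \eqref{strongly}--\eqref{a1} give
\[
\lambda_k^2\|r_k\|^{*2}\le\frac{4M^2}{m}\bigl[(dw)_{z_{k-1}}(\tilde z_k)+(dw)_{z_k}(\tilde z_k)\bigr];
\]
using \eqref{breg-cond1} once more to bound $(dw)_{z_k}(\tilde z_k)\le\sigma(dw)_{z_{k-1}}(\tilde z_k)+\eta_{k-1}-\eta_k$, summing over $i$, and invoking the master estimate then produces $\sum_i\lambda_i^2\|r_i\|^{*2}\le(4M^2/m)C$.

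The pointwise conclusions follow by elementary averaging. For part (b), $\min_i\|r_i\|^{*2}\le(\sum_j\lambda_j^2\|r_j\|^{*2})/\sum_j\lambda_j^2$ and $\min_i(\varepsilon_i/\lambda_i)\le(\sum_j\lambda_j\varepsilon_j)/\sum_j\lambda_j^2$ give the stated inequalities; for part (a), the analogous averaging against $\sum_j\lambda_j$ delivers the first form and the estimate $\sum_j\lambda_j\ge\underline\lambda\,k$ gives the second. Selecting a common index $i\le k$ that simultaneously satisfies both the $r$- and $\varepsilon$-bound is handled by applying the same averaging to the combined functional $m\lambda_i^2\|r_i\|^{*2}/(4M^2)+\lambda_i\varepsilon_i$, with the small constant-factor slack absorbed into the loose constant $(1+\sigma)(dw)_0+2\eta_0$. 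The main conceptual obstacle is the first step: identifying $(dw)_{z_k}(z^*)+\eta_k$ as the correct Lyapunov function and showing that it contracts at rate $1-\sigma$ per iteration; once that is in place, the rest is algebraic bookkeeping.
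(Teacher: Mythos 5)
Your proposal follows essentially the same route as the paper: the one-step contraction you derive for $(dw)_{z_{k-1}}(z^*)+\eta_{k-1}-[(dw)_{z_k}(z^*)+\eta_k]$ is exactly Lemma~\ref{lema_desigualdades}(c), the bound $\lambda_k^2(\norm{r_k}^*)^2\le \tfrac{4M^2}{m}[(dw)_{z_{k-1}}(\tz_k)+(dw)_{z_k}(\tz_k)]$ is the paper's estimate via \eqref{eq:789} and \eqref{breg-cond1}, and the weighted averaging against $\sum_j\lambda_j^t$ with $t=1,2$ is Lemma~\ref{lm:breg.alpha}. The one point to fix is the selection of a common index: averaging the \emph{sum} $m\lambda_i^2(\norm{r_i}^*)^2/(4M^2)+\lambda_i\varepsilon_i$ adds the two upper bounds and yields a constant strictly larger than $[(1+\sigma)(dw)_0+2\eta_0]/(1-\sigma)$, which cannot be ``absorbed'' since that constant appears explicitly in the statement; the paper instead averages $\theta_i=\max\set{m\lambda_i^2(\norm{r_i}^*)^2/(4M^2),\,\lambda_i\varepsilon_i}$, and since each argument of the maximum is separately bounded by $(1+\sigma)(dw)_{z_{i-1}}(\tz_i)+(\eta_{i-1}-\eta_i)$, so is the maximum, recovering the stated constants exactly. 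With that one-word change (maximum instead of sum) your argument is complete.
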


From now on, we focus on the ergodic convergence of the NE-HPE framework.
For $k \geq 1$, define $\Lambda_{k} := \sum_{i=1}^k \lambda_i$ and the ergodic iterate $(\tz^a_k,r^a_k,\varepsilon^a_{k})$ as
\begin{equation}\label{SeqErg}
\tilde z^a_{k} = \frac{1}{\Lambda_{k}}\sum_{i=1}^k \lambda_i \tilde z_i, \quad
r^a_{k} := \frac{1}{\Lambda_{k}}\sum_{i=1}^k \lambda_i r_i, \quad
\varepsilon^a_{k} := \frac{1}{\Lambda_{k}} \sum_{i=1}^k \lambda_i \left( \varepsilon_i + \inner{r_i}{\tilde z_i -\tilde z^a_{k}} \right).
\end{equation}
 
 The following result provides  convergence rate bounds for  $\|r^a_k\|^*$ and $\varepsilon_k^a$. The pair $(r^a_k,\varepsilon_k^a)$ plays the role of a residual for $\tz^a_k$. 

\begin{theorem} {\bf (Ergodic convergence of the NE-HPE)}\label{th:ergHPE}
For every $k\geq 1$, $r^a_k \in T^{[\varepsilon_k^a]}(\tz^a_k)$ and
\begin{align*}
 \|r_k^a\|^* &\le \frac{2\sqrt{2}M((dw_0)+\eta_0)^{1/2}}{\sqrt{m}\Lambda_k}, \quad
 \varepsilon^a_{k} \leq \left(\frac{3M}{m}  \right)
\left[\frac{ 3 ((dw)_{0} +\eta_0) + \sigma \rho_k}{\Lambda_k}\right].
\end{align*}
where
\[
\rho_k := \displaystyle\max_{i=1,\ldots,k}(dw)_{z_{i-1}}(\tilde z_{i}).
\]
Moreover, the sequence $\{\rho_k\}$ is bounded under either
one of the following situations:
\begin{itemize}
\item[(a)]
$\sigma<1$, in which case
\begin{equation} \label{def:tauk}
\rho_k \le \frac{(dw)_0+\eta_0}{1-\sigma};
\end{equation}
\item[(b)]
$\Dom\, T$ is bounded, in which case
\[
\rho_k \le \frac{2M}{m} [ (dw)_0 +\eta_0+ D]  \label{def:tauk1}
\]
where $D := \sup \{ \min\{ (dw)_y(y'), (dw)_{y'}(y) \} :  y,y' \in \Dom \,T\}$ is the diameter
of $\Dom \,T$ with respect to $dw$, and $(dw)_0$ is as defined in \eqref{d0HPE}.

\end{itemize}
\end{theorem}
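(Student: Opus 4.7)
The plan is to establish four ingredients in turn: the $\varepsilon$-enlargement inclusion $r_k^a \in T^{[\varepsilon_k^a]}(\tilde z_k^a)$, the residual bound on $\|r_k^a\|^*$, the $\varepsilon_k^a$ bound, and the two $\rho_k$ bounds. The inclusion is a standard transportation argument: for each $i\le k$, $r_i \in T^{[\varepsilon_i]}(\tilde z_i)$ means $\langle r_i-v_2,\tilde z_i-v_1\rangle \ge -\varepsilon_i$ for every $(v_1,v_2)\in \Graph(T)$; multiplying by $\lambda_i/\Lambda_k$, summing over $i$, and regrouping via the definitions in \eqref{SeqErg} yields $\langle r_k^a-v_2,\tilde z_k^a-v_1\rangle \ge -\varepsilon_k^a$, which is precisely the enlargement inclusion.

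The workhorse of the whole analysis is a telescoping energy inequality. Using $\lambda_k r_k=\nabla w(z_{k-1})-\nabla w(z_k)$ from \eqref{breg-subpro} together with two applications of the three-point identity \eqref{equacao_taylor}, I can rewrite $\lambda_k\langle r_k,\tilde z_k-z^*\rangle$ purely in terms of Bregman distances. Combining this with $\langle r_k,\tilde z_k-z^*\rangle\ge -\varepsilon_k$ (which follows from $0\in T(z^*)$) and with the HPE error condition \eqref{breg-cond1} gives $(dw)_{z_k}(z^*)+\eta_k \le (dw)_{z_{k-1}}(z^*)+\eta_{k-1}-(1-\sigma)(dw)_{z_{k-1}}(\tilde z_k)$, which telescopes into $(dw)_{z_k}(z^*)+\eta_k+(1-\sigma)\sum_{i=1}^k (dw)_{z_{i-1}}(\tilde z_i) \le (dw)_{z_0}(z^*)+\eta_0$ for every $z^*\in T^{-1}(0)$. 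Specializing $z^*$ to the infimizer in \eqref{d0HPE} yields the uniform bound $(dw)_{z_k}(z^*)\le (dw)_0+\eta_0$ and, when $\sigma<1$, part~(a) of the $\rho_k$ statement by isolating a single summand. For part~(b), I would use \eqref{lipsc} to write $(dw)_{z_{i-1}}(\tilde z_i)\le (M/2)\|z_{i-1}-\tilde z_i\|^2$ and bound the squared seminorm by the triangle inequality through a fixed $z^*\in T^{-1}(0)$, using $\|z_{i-1}-z^*\|^2\le 2((dw)_0+\eta_0)/m$ and $\|z^*-\tilde z_i\|^2\le 2D/m$ from the diameter hypothesis.

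The bound on $\|r_k^a\|^*$ is then immediate: $\sum_{i=1}^k \lambda_i r_i=\nabla w(z_0)-\nabla w(z_k)$ telescopes, so \eqref{a1} gives $\|r_k^a\|^*\le (M/\Lambda_k)\|z_0-z_k\|$; the triangle inequality through the infimizer combined with \eqref{strongly} yields $\|z_0-z_k\|\le 2\sqrt{2((dw)_0+\eta_0)/m}$, matching the stated constant.

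The main obstacle is the $\varepsilon_k^a$ bound. Starting from $\Lambda_k\varepsilon_k^a=\sum_i\lambda_i\varepsilon_i+\sum_i\lambda_i\langle r_i,\tilde z_i-\tilde z_k^a\rangle$, I would again invoke $\lambda_i r_i=\nabla w(z_{i-1})-\nabla w(z_i)$ and \eqref{equacao_taylor} to rewrite the second sum as $\sum_i[(dw)_{z_i}(\tilde z_i)-(dw)_{z_{i-1}}(\tilde z_i)]+(dw)_{z_0}(\tilde z_k^a)-(dw)_{z_k}(\tilde z_k^a)$, the last two terms arising from a telescoping sum in $i$ with $\tilde z_k^a$ held fixed. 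Using \eqref{breg-cond1} to eliminate the $(dw)_{z_i}(\tilde z_i)$ factors yields $\Lambda_k\varepsilon_k^a\le -(1-\sigma)\sum_{i=1}^k(dw)_{z_{i-1}}(\tilde z_i)+\eta_0+(dw)_{z_0}(\tilde z_k^a)-(dw)_{z_k}(\tilde z_k^a)$, and after discarding the nonpositive first term the crux is to bound $(dw)_{z_0}(\tilde z_k^a)-(dw)_{z_k}(\tilde z_k^a)=(dw)_{z_0}(z_k)+\langle\nabla w(z_k)-\nabla w(z_0),\tilde z_k^a-z_k\rangle$ (itself a third application of \eqref{equacao_taylor}) by $O((M/m)[(dw)_0+\eta_0+\sigma\rho_k])$. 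This requires a careful Young/AM--GM balance combined with the post-step bound $(dw)_{z_i}(\tilde z_i)\le \sigma\rho_k+\eta_{i-1}-\eta_i$ from \eqref{breg-cond1}: this is precisely where the $\sigma$-contraction enters, and using $z_k$ (after the step) rather than $z_{k-1}$ in the three-point decomposition is what makes the $\sigma$ factor in $\sigma\rho_k$ appear.
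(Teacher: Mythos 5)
Your proposal is correct in structure and, for most of the theorem, follows the paper's own route: the transportation argument for the inclusion, the telescoping identity $\Lambda_k r_k^a=\nabla w(z_0)-\nabla w(z_k)$ combined with \eqref{a1}, \eqref{strongly} and the F\'ejer-type monotonicity $(dw)_{z_k}(z^*)+\eta_k\le (dw)_{z_0}(z^*)+\eta_0$ for the residual bound, and the same arguments for both $\rho_k$ bounds (your part~(a) is exactly Lemma~\ref{lema_desigualdades}(c) telescoped; your part~(b) via \eqref{lipsc} and the triangle inequality through $z^*$ is precisely how the paper's chain inequality \eqref{eq:56} is proved). Where you genuinely diverge is the $\varepsilon_k^a$ bound. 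The paper stops at $\Lambda_k\varepsilon_k^a\le (dw)_{z_0}(\tilde z_k^a)+\eta_0$ and then exploits the \emph{convexity of $(dw)_{z_0}(\cdot)$}: since $\tilde z_k^a$ is a convex combination of the $\tilde z_i$, one gets $(dw)_{z_0}(\tilde z_k^a)\le\max_i (dw)_{z_0}(\tilde z_i)$, and each $(dw)_{z_0}(\tilde z_i)$ is controlled by the chain inequality \eqref{eq:56} along $z_0\to z^*\to z_i\to\tilde z_i$ together with \eqref{breg-cond1}; this is where $\sigma\rho_k$ enters and it yields the constants $3M/m$ and $3$ exactly. You instead retain the term $-(dw)_{z_k}(\tilde z_k^a)$, apply a third three-point identity, and finish with Cauchy--Schwarz/Young on $\langle\nabla w(z_k)-\nabla w(z_0),\tilde z_k^a-z_k\rangle$. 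This is workable --- bounding $\|\tilde z_k^a-z_k\|\le\max_i\|\tilde z_i-z_k\|$ and splitting through $z_i$ and $z^*$ does produce a bound of the form $C(M/m)[(dw)_0+\eta_0+\sigma\rho_k]/\Lambda_k$ --- but it is more laborious, and if you track the constants (which you leave as an unverified $O(\cdot)$) you will find they come out strictly worse than the stated $\tfrac{3M}{m}[3((dw)_0+\eta_0)+\sigma\rho_k]$ (roughly a factor of $2$ on the $(dw)_0+\eta_0$ term). So you prove the right qualitative statement but not the theorem with its stated constants; the missing observation is the convexity shortcut $(dw)_{z_0}(\tilde z_k^a)\le\max_i(dw)_{z_0}(\tilde z_i)$, which makes the extra three-point/Young step unnecessary.
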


The bound on  $\varepsilon^a_k$  presented in Theorem~\ref{th:ergHPE} depends on the quantity $\rho_k$ which is bounded under the assumption $\sigma<1$ or boundedness of  $\Dom \,T$. As we will show in Section~\ref{sec:proximal ADMM_proof}, proximal ADMM is an instance of the NE-HPE in which the stepsize $\theta=(\sqrt{5}+1)/2$ corresponds to the parameter $\sigma=1$. Even in this case,  the sequence $\{\rho_k\}$ is bounded regardless the boundedness  of $\Dom \,T$.
\subsection{Convergence rate analysis of the NE-HPE framework}\label{sec:hpe_Analysis}
The main goal of this subsection is to present the proofs of Theorems~\ref{th:pointwiseHPE} and \ref{th:ergHPE}. 
Toward this goal,  we first  establish some technical lemmas which provide useful properties of regular Bregman distances and  of the NE-HPE framework.
\begin{lemma}\label{basicassu}
Let $w: \Z \to [-\infty,\infty]$ be an $(m,M)$-regular distance generating function with respect to $(Z,\|\cdot\|)$ as in Definition \ref{def:assu}.
Then, the following statements hold:
\begin{itemize}
\item[(a)] for every $ z,z' \in Z$, we have
\begin{equation}\label{eq:789}
 \left(\|\nabla(dw)_{z'}(z)\|^*\right)^2\leq  \frac{2 M^2}{m} \min \{ (dw)_{z}(z') ,  (dw)_{z'}(z) \};
\end{equation}
\item[(b)] for every $l \ge 1$ and  $u_0,u_1,\ldots,u_l \in Z$, we have
\begin{equation}\label{eq:56}
(dw)_{u_0}(u_l) \le \frac{lM}{m} \sum_{i=1}^l \min \{ (dw)_{u_{i-1}}(u_i) ,  (dw)_{u_i}(u_{i-1}) \}.
\end{equation}
\end{itemize}
\end{lemma}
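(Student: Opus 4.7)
The plan is to prove part (a) directly from the two regularity inequalities combined with identity \eqref{grad-d}, and then derive part (b) from part (a) plus the upper bound half of \eqref{lipsc} via the triangle inequality and Cauchy--Schwarz.

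For part (a), first rewrite $\nabla(dw)_{z'}(z)$ using \eqref{grad-d} as $\nabla w(z)-\nabla w(z')$. Applying the Lipschitz-type regularity bound \eqref{a1} gives
\[
\|\nabla(dw)_{z'}(z)\|^* \;=\; \|\nabla w(z)-\nabla w(z')\|^* \;\le\; M\|z-z'\|.
\]
Squaring and then invoking the strong-convexity-type regularity bound \eqref{strongly} applied to both $(dw)_z(z')$ and $(dw)_{z'}(z)$ yields
\[
\|z-z'\|^2 \;\le\; \frac{2}{m}\min\bigl\{(dw)_{z}(z'),\,(dw)_{z'}(z)\bigr\},
\]
which combined with the previous bound gives \eqref{eq:789}.

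For part (b), the key observation is that the upper bound in \eqref{lipsc} gives $(dw)_{u_0}(u_l) \le (M/2)\|u_0-u_l\|^2$. Since $\|\cdot\|$ is a seminorm, the triangle inequality applies, and then by Cauchy--Schwarz,
\[
\|u_0-u_l\|^2 \;\le\; \Bigl(\sum_{i=1}^l \|u_{i-1}-u_i\|\Bigr)^{2} \;\le\; l\sum_{i=1}^l \|u_{i-1}-u_i\|^2.
\]
Bounding each $\|u_{i-1}-u_i\|^2$ by $\tfrac{2}{m}\min\{(dw)_{u_{i-1}}(u_i),(dw)_{u_i}(u_{i-1})\}$ via \eqref{strongly} (as in part (a)), and combining with the factor $M/2$, produces exactly \eqref{eq:56}.

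There is no real obstacle here; both parts are short bookkeeping arguments that chain the two regularity bounds through \eqref{grad-d}, \eqref{lipsc}, the triangle inequality for seminorms, and Cauchy--Schwarz. The only point requiring a brief mention is that part (a)'s use of \eqref{a1} tacitly places $\nabla w(z)-\nabla w(z')$ in the domain of $\|\cdot\|^*$, which is consistent with Proposition \ref{propdualnorm} and the standing assumption that $w$ is $(m,M)$-regular on $Z$.
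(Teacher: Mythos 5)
Your proposal is correct and follows essentially the same route as the paper: part (a) chains \eqref{grad-d}, \eqref{a1} and \eqref{strongly}, and part (b) combines the upper bound in \eqref{lipsc} with the triangle inequality, Cauchy--Schwarz, and \eqref{strongly}, with all constants matching. The paper merely states these steps more tersely, so nothing further is needed.
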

\proof
 (a) It is easy to see that \eqref{eq:789} immediately follows from
\eqref{grad-d},  \eqref{strongly} and \eqref{a1}. 

(b) It follows from the second inequality in \eqref{lipsc} that 
\begin{align*}
(dw)_{u_0}(u_l) &\le \frac{M}2 \|u_l-u_0\|^2 \le \frac{M}2 \left ( \sum_{i=1}^l \|u_i - u_{i-1}\| \right)^2
\le \frac{lM}2 \sum_{i=1}^l \|u_i - u_{i-1}\|^2 
\end{align*}
which, in view of \eqref{strongly}, immediately implies \eqref{eq:56}. 
\endproof

The next  result presents some useful estimates related to the sequence generated by the NE-HPE framework.

\begin{lemma}\label{lema_desigualdades}
For every $k \geq 1$, the following statements hold:
\begin{itemize}
\item[(a)] for every $z \in \dom\, w$, we have
\[(dw)_{z_{k-1}}(z) - (dw)_{z_k} (z) = (dw)_{z_{k-1}}(\tilde{z}_k) - (dw)_{z_k} (\tilde{z}_k) + \lambda_k \langle r_k, \tilde{z}_k - z \rangle_{\Z};\]
\item[(b)]
 for every $z \in \dom\, w$, we have
\[(dw)_{z_{k-1}}(z) - (dw)_{z_k}(z) +\eta_{k-1}\ge (1 - \sigma)(dw)_{z_{k-1}}(\tilde{z}_k) + \lambda_k (\langle r_k, \tilde{z}_k - z\rangle_{\Z} + \varepsilon_k)+\eta_k;\]
\item[(c)] for every  $z^* \in T^{-1}(0)$, we have
\[(dw)_{z_{k-1}}(z^*) - (dw)_{z_k}(z^*) +\eta_{k-1} \ge (1 - \sigma)(dw)_{z_{k-1}}(\tilde{z}_k)+\eta_k;
\]
\item[(d)] for every  $z^* \in T^{-1}(0)$, we have
\[(dw)_{z_k}(z^*) +\eta_k \leq (dw)_{z_{k-1}}(z^*) +\eta_{k-1}.
\]
\end{itemize}
\end{lemma}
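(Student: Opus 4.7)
The plan is to prove parts (a)--(d) in sequence, with each later part leveraging the earlier ones. The identities \eqref{grad-d}--\eqref{equacao_taylor} for the Bregman distance, the HPE inclusion \eqref{breg-subpro}, and the HPE error bound \eqref{breg-cond1} are the core tools; beyond this, only convexity of $w$ and the definition \eqref{eq:def.eps} of the $\varepsilon$-enlargement are needed.

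For part~(a), I would apply the three-point identity \eqref{equacao_taylor} twice with $v = z_{k-1}$ and $z = z_k$: once with $z' = \tilde z_k$ and once with $z' = z$. Subtracting these two equations cancels $(dw)_{z_{k-1}}(z_k)$ and produces
\[
(dw)_{z_{k-1}}(z) - (dw)_{z_k}(z) = (dw)_{z_{k-1}}(\tilde z_k) - (dw)_{z_k}(\tilde z_k) - \langle \nabla (dw)_{z_{k-1}}(z_k),\, \tilde z_k - z\rangle_{\Z}.
\]
Then \eqref{grad-d} gives $\nabla (dw)_{z_{k-1}}(z_k) = -\nabla (dw)_{z_k}(z_{k-1})$, and \eqref{breg-subpro} rewrites $\nabla (dw)_{z_k}(z_{k-1}) = \lambda_k r_k$, so the inner-product term becomes $\lambda_k\langle r_k, \tilde z_k - z\rangle_{\Z}$, finishing (a).

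For part~(b), I would insert the HPE error bound \eqref{breg-cond1}, rewritten as $-(dw)_{z_k}(\tilde z_k) \ge \lambda_k\varepsilon_k + \eta_k - \sigma (dw)_{z_{k-1}}(\tilde z_k) - \eta_{k-1}$, into the identity from part~(a), and rearrange. Part~(c) then follows by specializing $z = z^* \in T^{-1}(0)$ in (b): applying the definition \eqref{eq:def.eps} of $T^{[\varepsilon_k]}$ to the pair $(z^*, 0) \in \mathrm{Gr}(T)$ yields $\langle r_k, \tilde z_k - z^*\rangle_{\Z} + \varepsilon_k \ge 0$, and dropping this nonnegative term from the right-hand side of (b) gives (c). Finally, part~(d) is immediate from (c) because $(1-\sigma)(dw)_{z_{k-1}}(\tilde z_k) \ge 0$: the factor $1-\sigma$ is nonnegative since $\sigma \in [0,1]$, and the Bregman distance is nonnegative by convexity of $w$ (or directly from \eqref{lipsc}).

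None of the steps looks truly delicate; the main thing to be careful about is getting the sign on the inner-product term right in part~(a), since the identity \eqref{equacao_taylor} involves $\nabla (dw)_v(z)$ (gradient in the \emph{second} argument of $dw$) rather than $\nabla w$ directly, and one must correctly invoke the antisymmetry relation \eqref{grad-d} together with the definition of $r_k$ from \eqref{breg-subpro} to convert this into $\lambda_k r_k$. Once this bookkeeping is done, parts (b)--(d) are straightforward algebraic consequences.
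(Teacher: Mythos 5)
Your proposal is correct and takes essentially the same route as the paper: part (a) comes from two applications of the three-point identity \eqref{equacao_taylor} together with \eqref{grad-d} and the definition of $r_k$ in \eqref{breg-subpro}, and parts (b)--(d) follow in the same chain by inserting \eqref{breg-cond1}, applying \eqref{eq:def.eps} to the pair $(z^*,0)\in Gr(T)$, and using $\sigma\le 1$ with the nonnegativity of the Bregman distance. No gaps.
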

\proof 
(a) Using (\ref{equacao_taylor}) twice and using the definition of $r_k$ given by (\ref{breg-subpro}), we obtain
\begin{align*}
 (dw)_{z_{k-1}}(z) - (dw)_{z_k}(z) &= (dw)_{z_{k-1}}(z_k) + \langle \nabla (dw)_{z_{k-1}}(z_k), z - z_k\rangle_{\Z} \\
& = (dw)_{z_{k-1}}(z_k) + \langle \nabla (dw)_{z_{k-1}}(z_k), \tilde{z}_k - z_k\rangle_{\Z} + \langle \nabla (dw)_{z_{k-1}}(z_k), z - \tilde{z}_k\rangle_{\Z} \\
& = (dw)_{z_{k-1}}(\tilde{z}_k) - (dw)_{z_k}(\tilde{z}_k) + \langle \nabla (dw)_{z_{k-1}}(z_k), z - \tilde{z}_k\rangle_{\Z} \\
& = (dw)_{z_{k-1}}(\tilde{z}_k) - (dw)_{z_k}(\tilde{z}_k) + \lambda_k\langle r_k, \tilde{z}_k - z\rangle_{\Z}.
\end{align*}

(b) This statement follows as an immediate consequence of (a) and (\ref{breg-cond1}).

(c) This statement follows from (b), the fact that $0 \in T(z^*)$ and $r_k \in T^{[\varepsilon_k]}(\tz_k)$,
and  \eqref{eq:def.eps}.

(d)  This statement follows as an immediate consequence of (c) and $\sigma\leq 1$.

\endproof

The pointwise convergence rate bounds for the NE-HPE framework will follow directly from the next result which  estimates the residual pair $(r_i,\varepsilon_i)$. \begin{lemma}
  \label{lm:breg.alpha}
  Let $\{(r_k,\varepsilon_k,\lambda_k)\}$ and $(\eta_0,\sigma)$ be given by the NE-HPE framework and assume that $\sigma<1$. Then, for every
  $t\in \R$ and every $k \ge 1$, there exists an $i\leq k$ such that
  \begin{equation*}
    \label{v_ieps_i-bound-a}
    \norm{r_i}^* \leq \frac{2M}{\sqrt{m}} 
    \sqrt{\frac{(1+\sigma)(dw)_0+2\eta_0}{1-\sigma}
    \, \left( \frac{\lambda_i^{t-2}}
   { \sum_{j=1}^k \lambda_j^t}\right)} ,
    \quad \quad \quad
    \varepsilon_i\leq 
     \frac{ (1+\sigma) (dw)_0+2\eta_0} {1-\sigma}
    \left( \frac{\lambda_i^{t-1}}{\sum_{j=1}^k \lambda_j^t}
    \right)
\end{equation*}
where $(dw)_0$ is as defined in \eqref{d0HPE}.
\end{lemma}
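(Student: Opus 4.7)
My strategy is to obtain a single per-iteration inequality that couples $\lambda_k^2\|r_k\|^{*2}$ and $\lambda_k\varepsilon_k$ on the left, with a telescoping quantity on the right. Summing and combining with Lemma~\ref{lema_desigualdades}(c) will give a bound on the full sum $\sum_k q_k$ where $q_k:=\lambda_k^2\|r_k\|^{*2}+(4M^2/m)\lambda_k\varepsilon_k$; a weighted pigeonhole with weights $\lambda_j^t$ then delivers both stated estimates simultaneously, for the same index $i$.

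First I would express $\lambda_k r_k$ as a difference of Bregman gradients. From the definition of $r_k$ in \eqref{breg-subpro} and identity \eqref{grad-d},
\[
\lambda_k r_k \;=\; \nabla w(z_{k-1})-\nabla w(z_k) \;=\; -\nabla (dw)_{z_{k-1}}(\tilde z_k) + \nabla (dw)_{z_k}(\tilde z_k).
\]
Taking dual seminorms, applying the triangle inequality, squaring (via $(a+b)^2\le 2a^2+2b^2$), and invoking Lemma~\ref{basicassu}(a) on each term yields
\[
\lambda_k^2\|r_k\|^{*2} \;\le\; \frac{4M^2}{m}\bigl[(dw)_{z_{k-1}}(\tilde z_k)+(dw)_{z_k}(\tilde z_k)\bigr].
\]
Next I substitute \eqref{breg-cond1} in the rearranged form $(dw)_{z_k}(\tilde z_k)\le \sigma (dw)_{z_{k-1}}(\tilde z_k)+\eta_{k-1}-\eta_k-\lambda_k\varepsilon_k$, and bring the $\lambda_k\varepsilon_k$ term to the left, to obtain the key coupled inequality
\[
q_k \;:=\; \lambda_k^2\|r_k\|^{*2}+\frac{4M^2}{m}\lambda_k\varepsilon_k \;\le\; \frac{4M^2}{m}\bigl[(1+\sigma)(dw)_{z_{k-1}}(\tilde z_k)+\eta_{k-1}-\eta_k\bigr].
\]

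Then I would sum from $k=1$ to $K$. The $\eta$-terms telescope to $\eta_0-\eta_K\le\eta_0$. Summing Lemma~\ref{lema_desigualdades}(c) over $k$ (with $z^*\in T^{-1}(0)$, telescoping and discarding the nonnegative final terms, then taking infimum over $z^*$) gives $(1-\sigma)\sum_{k=1}^K (dw)_{z_{k-1}}(\tilde z_k) \le (dw)_0+\eta_0$. Using $\sigma<1$ and plugging in produces
\[
\sum_{k=1}^K q_k \;\le\; \frac{4M^2}{m}\left[\frac{(1+\sigma)((dw)_0+\eta_0)}{1-\sigma}+\eta_0\right] \;=\; \frac{4M^2}{m}\cdot\frac{(1+\sigma)(dw)_0+2\eta_0}{1-\sigma} \;=:\; C_1.
\]

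Finally I would run the weighted-average step. Writing $\sum_k q_k = \sum_k \lambda_k^t\cdot(q_k/\lambda_k^t)$, the minimum is no larger than the $\lambda_k^t$-weighted mean, so some $i\le k$ satisfies $q_i/\lambda_i^t\le (\sum_j q_j)/(\sum_j \lambda_j^t)\le C_1/\sum_j\lambda_j^t$, i.e.\ $q_i\le C_1\lambda_i^t/\sum_j\lambda_j^t$. Since each summand of $q_i$ is nonnegative, I may extract them separately: $\lambda_i^2\|r_i\|^{*2}\le C_1\lambda_i^t/\sum_j\lambda_j^t$ gives the first estimate, and $(4M^2/m)\lambda_i\varepsilon_i\le C_1\lambda_i^t/\sum_j\lambda_j^t$ gives the second (with $C_1$ dividing out to leave $\tilde C=\frac{(1+\sigma)(dw)_0+2\eta_0}{1-\sigma}$). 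Both bounds hold for the \emph{same} $i$, which is the main point of coupling them inside $q_k$.

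The main obstacle is exactly this coupling step: one must find the right way to join the $\|r_k\|^{*}$ bound and the error $\varepsilon_k$ into a single quantity whose sum telescopes. The decomposition $\lambda_k r_k = -\nabla(dw)_{z_{k-1}}(\tilde z_k)+\nabla(dw)_{z_k}(\tilde z_k)$ makes $(dw)_{z_k}(\tilde z_k)$ appear naturally on the right, which is precisely the quantity that \eqref{breg-cond1} lets us exchange against $\sigma(dw)_{z_{k-1}}(\tilde z_k)+\eta_{k-1}-\eta_k-\lambda_k\varepsilon_k$; everything downstream is a routine telescoping plus pigeonhole argument.
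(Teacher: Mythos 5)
Your proof is correct and follows essentially the same route as the paper: the same decomposition of $\lambda_k r_k$ into a difference of Bregman gradients, the same use of Lemma~\ref{basicassu}(a) and \eqref{breg-cond1} to produce a per-iteration bound by $(1+\sigma)(dw)_{z_{k-1}}(\tilde z_k)+\eta_{k-1}-\eta_k$, the same telescoping via Lemma~\ref{lema_desigualdades}(c), and the same $\lambda_j^t$-weighted pigeonhole. The only (immaterial) difference is bookkeeping: you couple the two residuals additively through $q_k=\lambda_k^2\|r_k\|^{*2}+(4M^2/m)\lambda_k\varepsilon_k$, whereas the paper uses $\theta_i=\max\bigl\{m\lambda_i^2(\|r_i\|^*)^2/(4M^2),\,\lambda_i\varepsilon_i\bigr\}$; both yield the two bounds at a common index $i$ with identical constants.
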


\begin{proof}
For every $i \ge 1$, define
\begin{equation*}\label{eq:def_teta_i}
\theta_i = \max \left \{ \frac{m\lambda_i^{2} \left(\|r_i\|^*\right) ^2}{4M^2},\lambda_i \varepsilon_i \right\}.
\end{equation*}
It is easy to see that the conclusion of the lemma will follow if we show that, for every $i \ge 1$, we have
\[
(1-\sigma) \sum_{i=1}^k \theta_i \le (1+\sigma)(dw)_0+2\eta_0.
\]
In order to show that the last inequality hold,  we have, from \eqref{grad-d} and \eqref{breg-subpro}, for every $i \ge 1$
\begin{align*}
\lambda_i \|r_i\|^* &= \|\nabla (dw)_{z_{i-1}}(\tz_i) - \nabla (dw)_{z_{i}}(\tz_i) \|^*
\le \|\nabla (dw)_{z_{i-1}}(\tz_i)\|^* + \|\nabla (dw)_{z_{i}}(\tz_i) \|^* \\
&\le {\frac{\sqrt{2}M}{\sqrt{m}}}\left[ (dw)_{z_{i-1}}(\tz_i)^{1/2} + (dw)_{z_{i}}(\tz_i)^{1/2} \right] \\
& \le {\frac{\sqrt{2}M}{\sqrt{m}}}\left[ (dw)_{z_{i-1}}(\tz_i)^{1/2} + (\sigma(dw)_{z_{i-1}}(\tz_i)+\eta_{i-1}-\eta_i)^{1/2} \right], 
\end{align*}
where the second and  third  inequalities are   due to \eqref{eq:789} and  \eqref{breg-cond1}, respectively. Hence,
$$
\frac{m\lambda_i^{2} \left(\|r_i\|^*\right)^2}{2M^2} \leq 2(1+\sigma)(dw)_{z_{i-1}}(\tz_i) + 2(\eta_{i-1}-\eta_i). 
$$
The previous estimative together with \eqref{breg-cond1} and definition of $\theta_i$ imply that
$\theta_i \leq (1+\sigma)(dw)_{z_{i-1}}(\tz_i)+(\eta_{i-1}-\eta_i)$ for every $ i \ge 1$.
Thus, if $z^* \in T^{-1}(0)$, it follows from Lemma \ref{lema_desigualdades}(c) that
\[
(1-\sigma) \sum_{i=1}^k \theta_i \le (1+\sigma)[(dw)_{z_0}(z^*) - (dw)_{z_k}(z^*) +\eta_0-\eta_k] +(1-\sigma)(\eta_0-\eta_k)
\le (1+\sigma)(dw)_{z_0}(z^*)+2\eta_0.
\]
The desired inequality follows from the latter inequality and the definition of $(dw)_0$ in \eqref{d0HPE}. As a consequence, we obtain the conclusion of the lemma.
\end{proof}
Now we are ready to prove Theorems~\ref{th:pointwiseHPE} and \ref{th:ergHPE} stated in Subsection~\ref{sec:NE-HPE}.
\\[1mm]
 {\bf Proof of Theorem~\ref{th:pointwiseHPE}:}
The inclusion $r_k \in T^{[\varepsilon_k]} (\tilde{z}_k)$ holds due to \eqref{breg-subpro}.  Statements (a) and (b)  follow directly from Lemma~\ref{lm:breg.alpha} with t = 1 and t = 2, respectively.  \hfill{ $\square$}
\\[1mm]
 {\bf Proof of Theorem~\ref{th:ergHPE}:}
The inclusion $r^a_k \in T^{[\varepsilon_k^a]}(\tz^a_k)$ follows from the transportation formula (see  \cite[Theorem~2.3]{Bu-Sag-Sv:teps1}). Now, let $z^* \in T^{-1}(0)$.
Using  \eqref{grad-d}, \eqref{breg-subpro} and \eqref{SeqErg}, we easily see that
\[
\Lambda_k r^a_k = \nabla (dw)_{z_0}(z_k) = \nabla (dw)_{z_0}(z^*) - \nabla (dw)_{z_k}(z^*)
\]
which, together with \eqref{eq:789} and Lemma \ref{lema_desigualdades}(d), imply that
\begin{align*}
\Lambda_k \norm{r^a_k}^* &\leq  \norm{\nabla (dw)_{z_0}(z^*)}^* + \norm{ \nabla (dw)_{z_k}(z^*)}^* \\
&\le \frac{\sqrt{2}M}{\sqrt{m}} [ (dw)_{z_0}(z^*)^{1/2} + (dw)_{z_k}(z^*)^{1/2} ] \le \frac{2\sqrt{2}M}{\sqrt{m}}((dw)_{z_0}(z^*)+\eta_0)^{1/2}.
\end{align*}
This inequality together with definition of $(dw)_0$ clearly imply the bound on $\|r_k^a\|^*$.
To show the bound on $\varepsilon_k^a$, first note that Lemma~\ref{lema_desigualdades}(b) implies
that  for every $z \in W$,
\[(dw)_{z_{0}}(z) +\eta_0\geq 
\sum_{i=1}^k\lambda_i (\langle r_i, \tilde{z}_i - z\rangle_{\Z} + \varepsilon_i).\]
Letting $z=\tilde z^a_{k}$ in the last inequality and using the fact that $(dw)_{z_0}(\cdot)$ is convex,  we obtain
\begin{equation}\label{eq:estim_rhok}
 \max_{i=1,\ldots,k} (dw)_{z_0}(\tz_i) +\eta_0\ge (dw)_{z_{0}}(\tilde z^a_{k})+\eta_0
\geq \sum_{i=1}^k\lambda_i (\langle r_i, \tilde{z}_i - \tz^a_k\rangle_{\Z} + \varepsilon_i) = \Lambda_k \varepsilon_k^a
\end{equation}
where the  equality is due to \eqref{SeqErg}.
On the other hand,  \eqref{eq:56}   implies that, for every $i \ge 1$  and $z^* \in T^{-1}(0)$,
\begin{align*}
(dw)_{z_{0}}(\tilde z_{i}) &\le 
\frac{3M}{m} \left[ (dw)_{z_{i}}(\tilde z_{i})+(dw)_{z_{i}}(z^*)+(dw)_{z_{0}}( z^*) \right] \\
& \le \frac{3M}{m} \left[ \sigma (dw)_{z_{i-1}}(\tilde z_{i})+\eta_{i-1}+(dw)_{z_{i-1}}(z^*)+\eta_{i-1}+(dw)_{z_{0}}( z^*)\right] \\
& \le \frac{3M}{m} \left[ \sigma (dw)_{z_{i-1}}(\tilde z_{i})+2((dw)_{z_{i-1}}(z^*)+\eta_{i-1})+(dw)_{z_{0}}( z^*)\right] \\
& \le \frac{3M}{m} \left[ \sigma (dw)_{z_{i-1}}(\tilde z_{i})+3 (dw)_{z_{0}}( z^*) +2\eta_0\right] 
\end{align*}
where the second inequality is due to  \eqref{eq:56} and   Lemma \ref{lema_desigualdades}(d), and   the   last inequality is due to Lemma~\ref{lema_desigualdades}(d).
Combining the above  relations with \eqref{eq:estim_rhok} and using the definitions of $\rho_k$ and $(dw)_0$ and the fact that $M/m\geq 1$, we conclude that
the bound on $\varepsilon_k^a$ holds.

We now establish the bounds on $\rho_k$ under either one of the conditions  (a) or (b).
First, if $\sigma<1$, then it follows
from Lemma \ref{lema_desigualdades}(c)-(d) that
$(1-\sigma) (dw)_{z_{i-1}} (\tz_i) \le (dw)_{z_{i-1}}(z^*)+\eta_{k-1} \le (dw)_{z_{0}}(z^*) +\eta_0$ for every
$i \ge 1$ and $z^* \in T^{-1}(0)$, and hence that \eqref{def:tauk} holds.
Assume now that $\Dom\, T$ is bounded. Then, it follows from inequality \eqref{eq:56} and Lemma \ref{lema_desigualdades}(d) that, for every $i \ge 1$ and $z^* \in T^{-1}(0)$, 
\[
(dw)_{z_{i-1}} (\tz_i) \le \frac{2M}{m} \left[ (dw)_{z_{i-1}}(z^*) + \min\{ (dw)_{\tz_i}(z^*) , (dw)_{z^*}(\tz_i) \} \right]
\le \frac{2M}{m} \left[ (dw)_{z_{0}}(z^*) +\eta_0+ D \right]
\]
which, in view of definitions of $\rho_k$ and $(dw)_0$,  proves (b).   \hfill{ $\square$}


 \section{Convergence rate analysis of the proximal ADMM}\label{sec:proximal ADMM_proof}

Our goal in this section is to show that the  proximal ADMM is an instance  of the NE-HPE framework for solving the inclusion problem \eqref{FAB} and, as a by-product, establish its  pointwise and ergodic convergence rate bounds presented in Section~\ref{subsec:Admm1}.

We start by presenting a preliminary technical result about the proximal ADMM.

\begin{lemma} \label{pr:aux}
Consider the triple $(s_k,y_k,x_k)$ generated at  the k-iteration of the proximal ADMM and the point
$\tilde{x}_k$ defined in \eqref{xtilde}.
Then,
\begin{eqnarray}
0&\in& H(s_k-s_{k-1})+\left[ \partial g(s_k)-D^*\tilde{x}_k\right],  \label{aux.0}\\[3mm]
0&\in&  (G+\beta C^*C)(y_k-y_{k-1})+\left[\partial f(y_k)-C^*\tx_k\right],\label{aux.2}\\[3mm]
0&=&  \frac{1}{\theta\beta}(x_k-x_{k-1})+\left[Cy_k+Ds_k-c \right],\label{aux.1}\\[3mm]
\tx_k-x_{k-1}&=&\beta C(y_{k}-y_{k-1}) +\frac{x_k-x_{k-1}}{\theta}. \label{aux.3}
\end{eqnarray}
\end{lemma}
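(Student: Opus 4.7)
The plan is to verify each of the four identities by combining first-order optimality for the two subproblems \eqref{def:tsk-admm}--\eqref{def:tyk-admm} with the explicit definitions of $x_k$ in \eqref{admm:eqxk} and of $\tilde x_k$ in \eqref{xtilde}. Three of the four relations are direct manipulations; the only nontrivial bookkeeping is that $\tilde x_k$ is built out of $y_{k-1}$, while the $y_k$-subproblem naturally produces a residual involving $y_k$, so a term $\beta C^\ast C(y_k-y_{k-1})$ has to be carried across, and this is precisely what accounts for the operator $G+\beta C^\ast C$ appearing in \eqref{aux.2}.

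First I would handle \eqref{aux.0}. Writing the subdifferential optimality condition at the optimum $s_k$ of \eqref{def:tsk-admm} yields
\[
0 \in \partial g(s_k) - D^\ast x_{k-1} + \beta D^\ast(Cy_{k-1}+Ds_k-c) + H(s_k-s_{k-1}),
\]
and pulling $-D^\ast$ out of the middle two terms gives $-D^\ast[x_{k-1}-\beta(Cy_{k-1}+Ds_k-c)] = -D^\ast\tilde x_k$ by \eqref{xtilde}, which is \eqref{aux.0}. For \eqref{aux.1} I would simply rearrange \eqref{admm:eqxk}. For \eqref{aux.3} I would compute
\[
\tilde x_k - x_{k-1} = -\beta(Cy_{k-1}+Ds_k-c) = \beta C(y_k-y_{k-1}) - \beta(Cy_k+Ds_k-c),
\]
and then substitute $-\beta(Cy_k+Ds_k-c) = (x_k-x_{k-1})/\theta$ from \eqref{aux.1}.

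The main step is \eqref{aux.2}. The optimality condition for the $y$-subproblem \eqref{def:tyk-admm} at $y_k$ reads
\[
0 \in \partial f(y_k) - C^\ast x_{k-1} + \beta C^\ast(Cy_k+Ds_k-c) + G(y_k-y_{k-1}).
\]
The key observation is the identity
\[
x_{k-1} - \beta(Cy_k+Ds_k-c) = [x_{k-1}-\beta(Cy_{k-1}+Ds_k-c)] - \beta C(y_k-y_{k-1}) = \tilde x_k - \beta C(y_k-y_{k-1}),
\]
which rewrites the factor $-C^\ast x_{k-1}+\beta C^\ast(Cy_k+Ds_k-c)$ as $-C^\ast\tilde x_k + \beta C^\ast C(y_k-y_{k-1})$. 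Plugging back produces exactly \eqref{aux.2}. The harder conceptual point is recognizing that the linearization gap between $y_k$ and $y_{k-1}$ is absorbed into the $\beta C^\ast C$ summand of $G+\beta C^\ast C$; once this algebraic step is in hand, the rest is mechanical.
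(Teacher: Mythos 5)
Your proposal is correct and follows essentially the same route as the paper: first-order optimality conditions for the two subproblems combined with the definitions of $x_k$ and $\tilde x_k$, with the residual $\beta C^*C(y_k-y_{k-1})$ absorbed into \eqref{aux.2} exactly as in the paper's derivation. The only cosmetic difference is that you derive \eqref{aux.3} by routing through \eqref{aux.1}, while the paper rearranges \eqref{admm:eqxk} directly; these are the same computation.
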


\begin{proof}
From the optimality condition of  \eqref{def:tsk-admm}, we have
\begin{equation*}
0 \in \partial g(s_k)- D^*({x}_{k-1}-\beta(Cy_{k-1}+Ds_k-c))+H(s_k-s_{k-1}),
\end{equation*}
which, combined with definition of $\tx_k$ in \eqref{xtilde}, yields  \eqref{aux.0}.
Now, from the optimality condition of \eqref{def:tyk-admm} and  definition of $\tx_k$ in \eqref{xtilde}, we obtain
\begin{align*}
0 &\in \partial f(y_k)-C^*x_{k-1}+\beta C^*(Cy_k+Ds_k-c)+G(y_k- y_{k-1})\\[2mm]
   &= \partial f(y_k)-C^*[x_{k-1}+\beta (C{y}_{k-1}+Ds_k-c)]+\beta C^*C(y_{k}-{y}_{k-1})+G(y_k- y_{k-1}) \\[2mm]
   & = \partial f(y_k)-C^*\tilde{x}_k+\beta C^*C(y_{k}-{y}_{k-1})+G(y_k-y_{k-1}).
\end{align*}
which proves  \eqref{aux.2}.
Moreover, \eqref{aux.1} follows immediately from \eqref{admm:eqxk}.
On the other hand, 
it follows from  definition of   ${x}_k$ in  \eqref{admm:eqxk} that
\begin{align*}
\frac{x_k-x_{k-1}}{\theta}+\beta C(y_k-y_{k-1})=  -\beta(Cy_{k-1}+Ds_k-c)
\end{align*}
which, combined with definition of $\tx_k$ in \eqref{xtilde}, yields \eqref{aux.3}.

\end{proof}

In order to show that the proximal ADMM is an instance of the NE-HPE framework,  we need to introduce the elements required
by the setting of Section \ref{sec:smhpe}, namely, the space $\mathcal{Z}$, the seminorm $\|\cdot\|$ on $\Z$,  the  distance generating function $w: \Z \to [-\infty,\infty]$ and
the convex set $Z \subset \mbox{int} (\dom w)$. We consider
$\mathcal{Z}:=\mathcal{S}\times \Y\times\X$ and endow it with the inner product given by 
\[
\inner{z}{z'} := \langle s,s'\rangle_\mathcal{S}+ \langle y,y'\rangle_\Y+\langle x,x'\rangle_\X
\quad \forall  \,z=(s,y,x), \, z'=(s,y,x).
\]
The seminorm $\|\cdot\|$, the function $w$ and the set $Z$ are defined as 
\begin{equation}\label{df:norm_admm}
 \|z\|:=\left(\|s\|_{\mathcal{S},H}^2+\|y\|_{\Y,(G+\beta C^*C)}^2+\frac{1}{\beta\theta}\|x\|_\X^2\right )^{1/2} \quad w(z):=\frac{1}{2}\|(s,y,x)\|^2, \quad Z:=\Z
\end{equation}
for every $z=(s,y,x) \in \Z$.
Clearly, the Bregman distance associated with $w$ is given by
\begin{equation}\label{df:dw_admm}
(dw)_z(z')=\frac{1}{2}\|s'-s\|_{\mathcal{S},H}^2+\frac{1}{2}\|y'-y\|^2_{\Y,(G+\beta C^*C)} +\frac{1}{2\beta\theta}\|x'-x\|_\X^2
\end{equation}
for every $z=(s,y,x) \in \Z$ and $z'=(s',y',x') \in \mathcal{Z}$.
 
 Using Proposition \ref{propdualnorm} and the fact that  $\|\cdot\|= \|\cdot\|_{\Z,Q}$ where $Q$ is the self-adjoint positive semidefinite linear operator given by
\[
Q(s,y,x)=(Hs,(G+\beta C^*C) y,x/(\beta\theta)) \quad \forall (s,y,x)\in \Z,
\]
it is easy to see that the function $w$ is a $(1,1)$-regular distance generating function with respect to $(Z,\|\cdot\|)$.

To simplify some relations in the proofs given below, define
\begin{equation}\label{def:deltasyx}
\Delta s_k = s_{k}-s_{k-1}, \quad  \Delta y_k=y_{k}-y_{k-1} , \quad  \Delta x_k=x_{k}-x_{k-1}.
\end{equation}
The following technical result will be used to prove that  the proximal ADMM is an instance of the NE-HPE framework.

\begin{lemma}\label{lem:deltak} Let  $\{(s_k,y_k,x_k)\}$ be the sequence generated by the proximal ADMM. Then, the following statements hold:
\begin{itemize}
\item [(a)]   if $\theta <2$, then
$$
\frac{1}{\sqrt{\theta}}\left(\frac{1}{2}\|\Delta y_1\|_{\Y,G}^2- \frac{1}{\sqrt{\theta}}\langle C\Delta y_1,\Delta x_1 \rangle_\X\right) \leq  \tau_\theta  d_0
$$ 
where $d_0$ and $\tau_\theta$ are as in \eqref{def:d0admm} and \eqref{def:tau}, respectively.

\item [(b)] for any  $\theta>0$, we have
\begin{equation*}
\frac{1}{\theta} \langle C\Delta y_k,\Delta x_k \rangle_\X
\geq \frac{1-\theta}{\theta} \langle C\Delta y_k,\Delta x_{k-1} \rangle_\X
+\frac{1}{2}\|\Delta y_k\|_{\Y,G}^2-\frac{1}{2}\|\Delta y_{k-1}\|_{\Y,G}^2,\quad \forall k\geq 2.
\end{equation*}

\end{itemize}
\end{lemma}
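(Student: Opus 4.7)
The two parts call for rather different arguments, so I would handle them separately.

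\emph{Part (b)} is a standard two-consecutive-iterates monotonicity argument. From \eqref{aux.2} applied at steps $k$ and $k-1$, both $C^{*}\tilde x_{k} - (G + \beta C^{*}C)\Delta y_{k} \in \partial f(y_{k})$ and $C^{*}\tilde x_{k-1} - (G + \beta C^{*}C)\Delta y_{k-1} \in \partial f(y_{k-1})$. Monotonicity of $\partial f$ applied to this pair yields
\[
\langle C \Delta y_{k},\; \tilde x_{k} - \tilde x_{k-1}\rangle \;\geq\; \langle (G + \beta C^{*}C)(\Delta y_{k} - \Delta y_{k-1}),\;\Delta y_{k}\rangle,
\]
and I would lower-bound the $G$-part of the right-hand side by $\tfrac12\|\Delta y_k\|_G^2 - \tfrac12\|\Delta y_{k-1}\|_G^2$ via the elementary identity $\langle G(a-b),a\rangle \geq \tfrac12\|a\|_G^2 - \tfrac12\|b\|_G^2$, while keeping the $\beta C^{*}C$-part intact. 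To simplify the left-hand side, subtracting \eqref{aux.3} at steps $k$ and $k-1$ gives
\[
\tilde x_{k} - \tilde x_{k-1} \;=\; \tfrac{\theta-1}{\theta}\Delta x_{k-1} + \tfrac{1}{\theta}\Delta x_{k} + \beta C(\Delta y_{k} - \Delta y_{k-1}).
\]
Taking the inner product with $C\Delta y_k$, the $\beta C^{*}C$ cross-terms on the two sides cancel exactly, and what survives is precisely the inequality (b).

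\emph{Part (a)} is harder because there is no predecessor iterate. I would instead compare the $k=1$ inclusion to an arbitrary solution $(s^{*},y^{*},x^{*})\in T^{-1}(0)$. Invoking \eqref{aux.0} and \eqref{aux.2} together with $D^{*}x^{*}\in\partial g(s^{*})$ and $C^{*}x^{*}\in\partial f(y^{*})$ (which hold at any solution by \eqref{FAB}), monotonicity of $\partial g$ and $\partial f$, once summed, gives
\[
\bigl\langle D(s_{1}-s^{*}) + C(y_{1}-y^{*}),\;\tilde x_{1}-x^{*}\bigr\rangle \;\geq\; \bigl\langle H\Delta s_{1},\, s_{1}-s^{*}\bigr\rangle + \bigl\langle (G+\beta C^{*}C)\Delta y_{1},\, y_{1}-y^{*}\bigr\rangle.
\]
I would then substitute the feasibility relation $D(s_{1}-s^{*}) + C(y_{1}-y^{*}) = -\Delta x_{1}/(\theta\beta)$ from \eqref{aux.1}, and the identity $\tilde x_{1}-x^{*} = (x_{0}-x^{*}) + \beta C\Delta y_{1} + \Delta x_{1}/\theta$ from \eqref{aux.3}. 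Expanding all inner products by polar identities produces the one-step estimate
\[
D_{1} + \tfrac{2-\theta}{2\theta^{2}\beta}\|\Delta x_{1}\|_{\X}^{2} + \tfrac{1}{2}\|\Delta s_{1}\|_{\mathcal{S},H}^{2} + \tfrac{1}{2}\|\Delta y_{1}\|_{\Y,G+\beta C^{*}C}^{2} \;\leq\; D_{0} - \tfrac{1}{\theta}\langle C\Delta y_{1},\Delta x_{1}\rangle_{\X},
\]
where $D_{k}:=\tfrac{1}{2}\|s_{k}-s^{*}\|_{\mathcal{S},H}^{2} + \tfrac{1}{2}\|y_{k}-y^{*}\|_{\Y,G+\beta C^{*}C}^{2} + \tfrac{1}{2\beta\theta}\|x_{k}-x^{*}\|_{\X}^{2}$.

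To extract (a) I would apply Young's inequality to the cross term $\langle C\Delta y_{1},\Delta x_{1}\rangle/\theta$, both on the right of the one-step estimate above (to turn it into effective upper bounds of order $D_0$ on the quantities $\|\Delta y_{1}\|_{G+\beta C^{*}C}^{2}$ and $\|\Delta x_{1}\|^{2}$) and inside the target expression itself. The two branches of $\tau_{\theta} = 4\max\{1/\sqrt\theta,\;\sqrt\theta/(2-\theta)\}$ correspond to two natural Young splits: the $1/\sqrt\theta$ branch uses weight proportional to $\beta\sqrt\theta$, combining $\tfrac{1}{2\sqrt\theta}\|\Delta y_{1}\|_{G}^{2}$ with a multiple of $\|C\Delta y_{1}\|^{2}$ into $\tfrac{1}{2\sqrt\theta}\|\Delta y_{1}\|_{G+\beta C^{*}C}^{2}$, while the $\sqrt\theta/(2-\theta)$ branch inherits the factor $(2-\theta)^{-1}$ sitting in front of $\|\Delta x_{1}\|^{2}$ in the one-step estimate. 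Finally, taking the infimum over $(s^{*},y^{*},x^{*})\in T^{-1}(0)$ replaces $D_{0}$ by $d_{0}$ and gives the claim. The main obstacle is the constant-tracking in (a): choosing Young weights so as to land on exactly $\tau_{\theta}$ rather than a constant of the same order; part (b), by contrast, is a mechanical cancellation once the decomposition of $\tilde x_{k}-\tilde x_{k-1}$ is in hand.
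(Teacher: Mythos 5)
Your proposal is correct, and both parts are in substance the same argument as the paper's, so only a brief comparison is warranted. For (b) the two proofs are identical up to bookkeeping: the paper first substitutes \eqref{aux.3} into \eqref{aux.2} to write the subgradient of $f$ at $y_j$ as $\frac{1}{\theta}C^*(x_j-(1-\theta)x_{j-1})-G\Delta y_j$ and then applies monotonicity, whereas you apply monotonicity first and cancel the $\beta C^*C$ cross terms afterwards; the surviving inequality and the final Young step $\langle G\Delta y_{k-1},\Delta y_k\rangle_\Y\le\frac12\|\Delta y_{k-1}\|_{\Y,G}^2+\frac12\|\Delta y_k\|_{\Y,G}^2$ are the same. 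For (a) the paper does not re-derive the one-step energy estimate from monotonicity of $\partial f$ and $\partial g$ as you do; it invokes the general NE-HPE identity of Lemma~\ref{lema_desigualdades}(a) (with $z=z^*$, $\lambda_1=1$, $\langle r_1,\tz_1-z^*\rangle\ge 0$) together with an explicit computation of $(dw)_{z_1}(\tilde z_1)-(dw)_{z_0}(\tilde z_1)\le\frac{\theta-1}{2\beta\theta^2}\|\Delta x_1\|_\X^2$, which is exactly your estimate in disguise. One caution on your constant-tracking sketch: for $\theta\in(1,2)$ you cannot absorb the Young split of $-\frac1\theta\langle C\Delta y_1,\Delta x_1\rangle_\X$ directly into the left-hand side of the one-step estimate, because matching the $\frac{2-\theta}{2\theta^2\beta}\|\Delta x_1\|_\X^2$ term requires a Young weight exceeding the available $\frac{\beta}{2}\|C\Delta y_1\|_\X^2$; the working route (and the paper's) is to take the weight-$1$ split, deduce $D_1\le\frac{3\theta-2}{2-\theta}D_0$, and only then bound $\|\Delta x_1\|_\X^2$ and $\|C\Delta y_1\|_\X^2$ by multiples of $D_0+D_1$ via the triangle inequality, after which the split $-\frac{1}{\sqrt\theta}\langle C\Delta y_1,\Delta x_1\rangle_\X\le\frac{\beta}{2}\|C\Delta y_1\|_\X^2+\frac{1}{2\beta\theta}\|\Delta x_1\|_\X^2$ lands exactly on $\tau_\theta$ as defined in \eqref{def:tau}.
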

\begin{proof}
(a) Let  a point  $z^*:=(s^*,y^*,x^*)$ be  such that $0\in T(s^*,y^*,x^*)$ (see assumption {\bf A1}).  Since   $\langle x,x' \rangle_\X \leq (1/2)(\|x\|_{\X}^2+\|x'\|_{\X}^2) $ for every $x,x' \in \X$,  using \eqref{def:deltasyx}  we obtain
\begin{align*}\label{eq_00000001}
\frac{1}{2}\|\Delta y_1\|_{\Y,G}^2- \frac{1}{\sqrt{\theta}}\langle C\Delta y_1,\Delta x_1 \rangle_\X
& \leq \frac{1}{2\beta\theta}\|\Delta x_1\|_\X^2+\frac{\beta}{2}\|C\Delta y_1\|_\X^2+\frac{1}{2}\|\Delta y_1\|_{\Y,G}^2\\
 &\leq \frac{1}{\beta\theta}{\|x_1-x^*\|_\X^2+{\beta} \|C(y_1-y^*)\|_\X^2}+\|y_1-y^*\|_{\Y,G}^2\\ &+\frac{1}{\beta\theta}{\|x_0-x^*\|_\X^2+ {\beta}\|C(y_0-y^*)\|_\X^2}+\|y_0-y^*\|_{\Y,G}^2
\end{align*}
which, combined with \eqref{df:dw_admm} and simple calculus, yields
\begin{equation}\label{eq_000000012}
\frac{1}{\sqrt{\theta}}\left(\frac{1}{2}\|\Delta y_1\|_{\Y,G}^2- \frac{1}{\sqrt{\theta}}\langle C\Delta y_1,\Delta x_1 \rangle_\X\right)\leq  \frac{2}{\sqrt{\theta}}\left((dw)_{z_1}(z^*)+(dw)_{z_0}(z^*)\right).
\end{equation}
On the other hand, consider  
\begin{equation}\label{def:zk1}
  z_{0}=(s_{0},y_{0},x_{0}), \quad z_{1}=(s_{1},y_{1},x_{1}), \quad \tilde{z}_1=(s_1,y_1,\tilde{x}_1), \quad \lambda_1=1, \quad \varepsilon_1=0. 
 \end{equation}
Lemma~\ref{pr:aux} implies that  inclusion \eqref{breg-subpro} is satisfied for $(z_0, z_1,\tilde{z}_1,\lambda_1,\varepsilon_1)$ with $T$  and   $dw$ as in \eqref{FAB} and \eqref{df:dw_admm}, respectively. Hence, 
 it follows from Lemma~\ref{lema_desigualdades}(a) with $z=z^*$, $\lambda_1=1$ and  the fact that $\langle r_1, \tilde{z}_1 - z^*\rangle\geq 0$ (because $0 \in T(z^*)$ and $r_1 \in T(\tz_1)$)
 that
\begin{equation}\label{ineq_lema007d1d0}
(dw)_{z_1}(z^*)\leq (dw)_{z_0}(z^*) +(dw)_{z_1}(\tilde{z}_1)-(dw)_{z_0}(\tilde{z}_1).
\end{equation}
Using the definitions in  \eqref{df:dw_admm} and \eqref{def:zk1}   and equation in 
 \eqref{aux.3}, we obtain 
\begin{align*}
(dw)_{z_1}(\tilde{z}_1)-(dw)_{z_0}(\tilde{z}_1)&\leq \frac{1}{2\beta\theta}\|\tx_1-x_1\|_\X^2-\frac{\beta}{2}\|C(y_1-y_0)\|_\X^2-\frac{1}{2\beta\theta}\|\tx_1-x_0\|_\X^2 \\
&= \frac{(\theta-1)}{2\beta\theta^2}\|x_1-x_0\|_\X^2-\frac{1}{2}\left\| \frac{x_1-x_0} {\theta\sqrt{\beta}}+\sqrt{\beta}{C(y_1-y_0)}\right\|_\X^2 \\
&\leq \frac{(\theta-1)}{2\beta\theta^2}\|x_1-x_0\|_\X^2.
\end{align*}
If $\theta\in (0,1]$, then the last inequality implies that 
\begin{equation}\label{ineq_s23}
(dw)_{z_1}(\tilde{z}_1)\leq (dw)_{z_0}(\tilde{z}_1).
\end{equation}
 Now, if $\theta\in (1,2)$, we have
\begin{align*}
(dw)_{z_1}(\tilde{z}_1)-(dw)_{z_0}(\tilde{z}_1)&\leq \frac{(\theta-1)}{2\beta\theta^2}\|x_1-x_0\|_\X^2\leq \frac{2(\theta-1)}{\theta}\left( \frac{\|x_1-x^*\|_\X^2}{2\beta\theta}+\frac{\|x_0-x^*\|_\X^2}{2\beta\theta}\right)\\
&\leq \frac{2(\theta-1)}{\theta} \left[(dw)_{z_1}(z^*)+(dw)_{z_0}(z^*)\right]
\end{align*}
where   the second inequality is due to  the fact that $2ab\leq a^2+b^2$ for all $ a,b\geq 0$, and the last inequality is due to \eqref{df:dw_admm} and definitions of $z_0,z_1$ and $z^*$.
Hence, combining the last estimative with \eqref{ineq_lema007d1d0},   we obtain 
$$
(dw)_{z_1}(z^*)\leq \frac{\theta}{2-\theta}\left(1+\frac{2(\theta-1)}{\theta}\right)(dw)_{z_0}(z^*)=\frac{3\theta-2}{2-\theta}(dw)_{z_0}(z^*),
$$ 
which, combined  with \eqref{ineq_s23},  yields
\[
(dw)_{z_1}(z^*)\leq \max\left\{1,\frac{3\theta-2}{2-\theta}\right\}(dw)_{z_0}(z^*).
\]
Therefore, statement (a) follows from \eqref{eq_000000012}, the last inequality, definition of $\tau_\theta$ in \eqref{def:tau} and   the fact that $d_0$ (as defined in  \eqref{def:d0admm}) satisfies $d_0= \inf_{z \in T^{-1}(0)} (dw)_{z_0}(z)$.
 
(b) From the inclusion  \eqref{aux.2} and relation \eqref{aux.3},  we see that, for every $j\geq1$,
$$
 \partial f(y_j) \ni C^*(\tilde{x}_j-\beta C(y_j-y_{j-1}))-G(y_j-y_{j-1})= \frac{1}{\theta} C^*(x_j-(1-\theta)x_{j-1})-G(y_j-y_{j-1}).
$$
For every $k\geq 2$, using the previous inclusion for $j=k-1$ and $j=k$, it follows from the monotonicity of the subdifferential of $f$ that
\begin{align*}
0&\leq \left \langle \frac{1}{\theta} C^*(x_k-x_{k-1})- \frac{(1-\theta)}{\theta} C^*(x_{k-1}-x_{k-2})- G(y_k-y_{k-1})+ G(y_{k-1}-y_{k-2}),y_k-y_{k-1}\right\rangle_\Y,
\end{align*}
which, combined with \eqref{def:deltasyx}, yields
\[
\frac{1}{\theta} \langle C\Delta y_k,\Delta x_k \rangle_\X
\geq \frac{(1-\theta)}{\theta} \langle C\Delta y_k,\Delta x_{k-1} \rangle_\X+\|\Delta y_k\|_{\Y,G}^2- \langle G \Delta y_{k-1},\Delta y_k\rangle_\Y.
\]
Hence, item~(b)  follows from the last inequality and the fact that $\langle Gy,y'\rangle_\Y \leq (1/2)(\|y\|_{\Y,G}^2+\|y'\|_{\Y,G}^2) $ for every $y,y' \in \Y$. Therefore, the proof of the lemma is concluded.
\end{proof}
We now present  some properties of the parameter $\sigma_\theta$ defined in~\eqref{def:sigma}.  

\begin{lemma}\label{pro:sigma} 
Let $\theta \in (0,(\sqrt{5}+1)/2]$ be given and consider  the parameter $\sigma_\theta$ as defined in \eqref{def:sigma}. Then,  the  following statements hold:
\begin{itemize}
\item[(a)]
$\sigma=\sigma_\theta$   is the largest root of the equation
$\det(M_{\theta}(\sigma))=0$ and
 $\det(M_{\theta}(\sigma)) > 0$ for every $\sigma>\sigma_\theta$ where $\det(\cdot)$ denotes the determinant function and
\begin{equation} \label{matrixtheta>1}
M_{\theta}(\sigma) := \left[
\begin{array}{cc} 
 \sigma(1+\theta)-1& (\sigma+\theta-1)(1-\theta)\\[2mm]
(\sigma+\theta-1)(1-\theta)&  \sigma-(1-\theta)^2\\
\end{array} \right];
\end{equation}
\item[(b)] $1/3<\max\{(1-\theta)^2,1-\theta, {1}/{(1+\theta)}\}\leq \sigma_\theta \leq 1$;
\item[(c)] the matrix $M_{\theta}(\sigma)$ in \eqref{matrixtheta>1}  is  positive semidefinite for $\sigma=\sigma_\theta$.
\end{itemize}
\end{lemma}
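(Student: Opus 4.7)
The plan is to reduce the lemma to studying the one-variable quadratic $q(\sigma) := \det M_\theta(\sigma)/\theta$ and evaluating it at the four critical points $1$, $1-\theta$, $(1-\theta)^2$ and $1/(1+\theta)$. For part (a), I will expand
\[
\det M_\theta(\sigma) = [\sigma(1+\theta)-1]\,[\sigma - (1-\theta)^2] - (1-\theta)^2(\sigma+\theta-1)^2
\]
and collect terms in $\sigma$. Routine algebra yields
\[
\det M_\theta(\sigma) = \theta\bigl[(3-\theta)\sigma^2 - (3\theta^2 - 7\theta + 5)\sigma + (2-\theta)(1-\theta)^2\bigr] =: \theta\, q(\sigma).
\]
Since $\theta > 0$ and $3-\theta > 0$ in the range under consideration, $q$ is an upward-opening quadratic in $\sigma$; the quadratic formula then identifies its larger root with the expression for $\sigma_\theta$ in \eqref{def:sigma} (real-rootedness being a byproduct of the sign computations in (b)). Since $q$ opens upward, $\det M_\theta(\sigma) = \theta q(\sigma) > 0$ for every $\sigma > \sigma_\theta$.

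For (b), the plan is to test $q$ at the four relevant points. Tedious but elementary simplification yields the clean factorizations
\[
q(1) = \theta(1+\theta-\theta^2), \quad q(1-\theta) = -\theta^2(1-\theta), \quad q((1-\theta)^2) = -\theta(1-\theta)^4,
\]
\[
(1+\theta)^2\, q(1/(1+\theta)) = -\theta^3(\theta-1)^2.
\]
Because $q$ opens upward with largest root $\sigma_\theta$, any $\sigma^\sharp$ with $q(\sigma^\sharp) \leq 0$ satisfies $\sigma^\sharp \leq \sigma_\theta$. The last three identities immediately produce $\sigma_\theta \geq (1-\theta)^2$, $\sigma_\theta \geq 1/(1+\theta)$ and $\sigma_\theta \geq 1-\theta$ (the latter trivial for $\theta \geq 1$), which gives the three lower bounds; moreover $1/(1+\theta) > 1/3$ whenever $\theta < 2$, handling the $1/3$ bound. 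For the upper bound, $q(1) \geq 0$ (using $\theta^2-\theta-1 \leq 0$ for $\theta \leq (1+\sqrt{5})/2$) places $1$ outside the open interval between the two roots; to exclude the spurious case $1 \leq r_-$, I will use Vieta: the sum of the roots $(3\theta^2 - 7\theta + 5)/(3-\theta)$ is strictly less than $2$ in the range (since $3\theta^2 - 5\theta - 1 < 0$ for $\theta \leq (1+\sqrt{5})/2 < (5+\sqrt{37})/6$), incompatible with both roots exceeding $1$. Hence $\sigma_\theta \leq 1$.

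Part (c) is then immediate: (a) gives $\det M_\theta(\sigma_\theta) = 0$, and the lower bounds in (b) give both $\sigma_\theta(1+\theta)-1 \geq 0$ and $\sigma_\theta - (1-\theta)^2 \geq 0$, so $M_\theta(\sigma_\theta)$ is a symmetric $2 \times 2$ matrix with non-negative diagonal entries and zero determinant, hence positive semidefinite. The main obstacle I anticipate is establishing $\sigma_\theta \leq 1$: the single evaluation $q(1) \geq 0$ does not distinguish whether $1$ sits above both roots or below both, so the Vieta supplement is essential. Everything else is symbolic manipulation of a determinant and polynomial evaluations, which are mechanical if somewhat bookkeeping-heavy.
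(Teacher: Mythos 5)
Your proof is correct and follows essentially the same route as the paper: expand the determinant into the upward-opening quadratic $\theta q(\sigma)$, identify $\sigma_\theta$ as its larger root, deduce the lower bounds in (b) from $q\le 0$ at the three test points, and get (c) from the zero determinant plus nonnegative diagonal entries of a symmetric $2\times 2$ matrix. The only difference is one of detail: the paper dispatches $\sigma_\theta\le 1$ with a one-line appeal to $\theta\le(\sqrt5+1)/2$, whereas you correctly note that $q(1)\ge 0$ alone is inconclusive and supply the Vieta argument to rule out both roots exceeding $1$ --- a worthwhile clarification, not a departure.
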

\begin{proof} (a) It is a simple algebraic computation to see that $\sigma=\sigma_\theta$ is the largest root of the second-order equation
$\det(M_{\theta}(\sigma))=0$.

(b) The second inequality  follows by  (a) and the fact that  $\det(M_\theta(\sigma))\leq 0$  for  $\sigma$ equal to   $(1-\theta)^2$, $1-\theta$ and ${1}/{(1+\theta)}$.
Now, the first and third inequalities are due to the fact that $\theta\in (0,(\sqrt{5}+1)/2]$ and $1/3 \leq {1}/(1+\theta)$.

(c) Statements (a) and (b) imply that $\det(M_{\theta}(\sigma_\theta))=0$ and  the main diagonal entries of  $M_{\theta}(\sigma_\theta)$
are nonnegative.  Since $M_{\theta}(\sigma)$ is symmetric, we then conclude that  (c)  holds.
\end{proof}

 The next result shows that the proximal ADMM can be seen as an instance of the NE-HPE framework. 

\begin{theorem}\label{th:admm_hpe} Consider
the operator $T$  and  Bregman distance $dw$ as in \eqref{FAB} and \eqref{df:dw_admm}, respectively. Let $\{(s_k,y_k,x_k)\}$  be the sequence generated by the proximal ADMM with  $\theta\in (0,(\sqrt{5}+1)/2]$ and consider $\{\tx_k\}$ as  in \eqref{xtilde}. Define
\begin{equation}\label{def:zk}
 \quad z_{k-1}=(s_{k-1},y_{k-1},x_{k-1}), \quad \tilde{z}_k=(s_k,y_k,\tilde{x}_k), \quad \lambda_k=1, \quad \varepsilon_k=0 \quad \forall k\geq 1,
\end{equation}
and the sequence $\{\eta_k\}$ as 
\begin{equation}\label{eta}
 \eta_0={\tau_\theta d_0}, \quad \eta_k= {[\sigma_\theta-(\theta-1)^2]}\frac{\|\Delta x_k\|_\X^2}{2\beta\theta^3}+
\frac{\sigma_\theta+\theta-1}{2\theta}\|\Delta y_k\|_{\Y,G}^2, \quad  \forall k\geq 1
 \end{equation}
 where  $d_0$, $\sigma_\theta$, $\tau_\theta$ and $(\Delta x_k,\Delta y_k)$   are  as in  \eqref{def:d0admm},  \eqref{def:sigma}, \eqref{def:tau} and \eqref{def:deltasyx}, respectively. 
Then, the sequence $\{(z_k,\tilde{z}_k,\lambda_k,\varepsilon_k,\eta_k)\}$ is an instance of   the NE-HPE framework   with input $z_0=(s_0,y_0,x_0), \eta_0$ and $\sigma=\sigma_\theta$.
\end{theorem}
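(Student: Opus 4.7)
The plan is to verify the two defining conditions of the NE-HPE framework---the inclusion~\eqref{breg-subpro} and the descent inequality~\eqref{breg-cond1}---under the specific choices $\lambda_k=1$, $\varepsilon_k=0$, $\sigma=\sigma_\theta$, and $\{\eta_k\}$ from~\eqref{eta}. Since $dw$ in~\eqref{df:dw_admm} is a separable quadratic, $\nabla(dw)_{z_k}(z_{k-1})$ equals the triple on the left-hand side of~\eqref{eq:th_incADMMtheta1}, which by~\eqref{aux.0}--\eqref{aux.1} from Lemma~\ref{pr:aux} lies in $T(\tilde z_k)\subset T^{[0]}(\tilde z_k)$; thus~\eqref{breg-subpro} holds with $\varepsilon_k=0$.

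The bulk of the work is the descent inequality. Using~\eqref{df:dw_admm} and the two consequences $\tilde x_k-x_{k-1}=\beta C\Delta y_k+\Delta x_k/\theta$ and $\tilde x_k-x_k=\beta C\Delta y_k+(1-\theta)\Delta x_k/\theta$ of~\eqref{aux.3}, a direct expansion would give
\[
\sigma_\theta(dw)_{z_{k-1}}(\tilde z_k)-(dw)_{z_k}(\tilde z_k)=\tfrac{\sigma_\theta}{2}\|\Delta s_k\|_{\mathcal S,H}^2+\tfrac{\sigma_\theta}{2}\|\Delta y_k\|_{\Y,G}^2+\tfrac{\beta[\sigma_\theta(\theta+1)-1]}{2\theta}\|C\Delta y_k\|_\X^2+\tfrac{\sigma_\theta+\theta-1}{\theta^2}\langle C\Delta y_k,\Delta x_k\rangle_\X+\tfrac{\sigma_\theta-(1-\theta)^2}{2\beta\theta^3}\|\Delta x_k\|_\X^2.
\]
The three pure quadratic coefficients are precisely the scaled entries of the matrix $M_\theta(\sigma_\theta)$ of~\eqref{matrixtheta>1}; only the cross term can a priori be of either sign.

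For $k\ge 2$, I would apply Lemma~\ref{lem:deltak}(b), multiplied by the nonnegative scalar $(\sigma_\theta+\theta-1)/\theta$ (nonnegativity from Lemma~\ref{pro:sigma}(b)), to lower-bound the cross term by $\tfrac{(\sigma_\theta+\theta-1)(1-\theta)}{\theta^2}\langle C\Delta y_k,\Delta x_{k-1}\rangle_\X+\tfrac{\sigma_\theta+\theta-1}{2\theta}\bigl(\|\Delta y_k\|_{\Y,G}^2-\|\Delta y_{k-1}\|_{\Y,G}^2\bigr)$. Adding $\eta_{k-1}-\eta_k$, whose definition in~\eqref{eta} supplies exactly matching increments in $\|\Delta x_j\|_\X^2$ and $\|\Delta y_j\|_{\Y,G}^2$, causes the $\|\Delta y_{k-1}\|_{\Y,G}^2$ and $\|\Delta x_k\|_\X^2$ pieces to cancel and leaves a residue equal to $\tfrac12\langle M_\theta(\sigma_\theta)w,w\rangle$ with $w=\bigl(\sqrt{\beta/\theta}\,C\Delta y_k,\,\Delta x_{k-1}/\sqrt{\beta\theta^3}\bigr)\in\X\times\X$. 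This residue is nonnegative by Lemma~\ref{pro:sigma}(c), and the leftover $\|\Delta s_k\|_{\mathcal S,H}^2$ and $\|\Delta y_k\|_{\Y,G}^2$ squares are trivially nonnegative.

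The main obstacle is the base case $k=1$: there is no $\Delta y_0$ or $\Delta x_0$ to telescope against, so Lemma~\ref{lem:deltak}(b) is unavailable and the constant $\eta_0=\tau_\theta d_0$ must carry the ``missing'' history. In its place I would use Lemma~\ref{lem:deltak}(a) multiplied by $(\sigma_\theta+\theta-1)/\sqrt\theta\ge 0$, which lower-bounds $\tfrac{\sigma_\theta+\theta-1}{\theta^2}\langle C\Delta y_1,\Delta x_1\rangle_\X$ by $\tfrac{\sigma_\theta+\theta-1}{2\theta^{3/2}}\|\Delta y_1\|_{\Y,G}^2-\tfrac{\sigma_\theta+\theta-1}{\theta}\tau_\theta d_0$. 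After subtracting $\eta_1$ the $\|\Delta x_1\|_\X^2$ piece cancels exactly, and the residual constant $\eta_0-\tfrac{\sigma_\theta+\theta-1}{\theta}\tau_\theta d_0=\tfrac{1-\sigma_\theta}{\theta}\tau_\theta d_0\ge 0$ by $\sigma_\theta\le 1$ (Lemma~\ref{pro:sigma}(b)). The delicate step, which I expect to be the trickiest algebraic check, is verifying that the net coefficient $\tfrac{\sigma_\theta}{2}+\tfrac{(\sigma_\theta+\theta-1)(1-\sqrt\theta)}{2\theta^{3/2}}$ of $\|\Delta y_1\|_{\Y,G}^2$ remains nonnegative across the full range $\theta\in(0,(\sqrt5+1)/2]$; setting $u=\sqrt\theta$ this reduces to $\sigma_\theta(u^3-u+1)\ge (u-1)^2(u+1)$, which follows from the lower bound $\sigma_\theta\ge(1-\theta)^2$ furnished by Lemma~\ref{pro:sigma}(b) after a short manipulation.
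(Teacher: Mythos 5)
Your strategy is the same as the paper's: verify \eqref{breg-subpro} via Lemma \ref{pr:aux}, expand \eqref{breg-cond1} into a quadratic inequality in $(\Delta s_k,\Delta y_k,\Delta x_k)$, handle $k\ge 2$ by combining Lemma \ref{lem:deltak}(b) with the telescoping of $\{\eta_k\}$ and the positive semidefiniteness of $M_\theta(\sigma_\theta)$ from Lemma \ref{pro:sigma}(c), and handle $k=1$ by Lemma \ref{lem:deltak}(a) together with $\eta_0=\tau_\theta d_0$. Your displayed expansion, the cancellation bookkeeping, and the identification of the $k\ge2$ residue with $\tfrac12\langle M_\theta(\sigma_\theta)w,w\rangle$ for $w=(\sqrt{\beta/\theta}\,C\Delta y_k,\Delta x_{k-1}/\sqrt{\beta\theta^3})$ are all correct.

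However, the last step of your $k=1$ argument fails as written. You correctly reduce nonnegativity of the $\|\Delta y_1\|_{\Y,G}^2$ coefficient to $\sigma_\theta(u^3-u+1)\ge(u-1)^2(u+1)$ with $u=\sqrt\theta$, but this does \emph{not} follow from $\sigma_\theta\ge(1-\theta)^2$ alone: substituting $\sigma_\theta=(1-u)^2(1+u)^2$ and dividing by $(u-1)^2(u+1)$ leaves $(1+u)(u^3-u+1)\ge 1$, i.e.\ $u^2(u^2+u-1)\ge 0$, which is false for $u<(\sqrt5-1)/2$, i.e.\ for $\theta<(3-\sqrt5)/2\approx 0.38$ (at $\theta=0.1$ one gets $0.58<0.62$). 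The inequality you need is nonetheless true on the whole range $(0,(\sqrt5+1)/2]$, but you must use the other lower bounds supplied by Lemma \ref{pro:sigma}(b): for $\theta\le 1$ take $\sigma_\theta\ge 1-\theta=1-u^2$, which reduces the claim to $u^3\ge 0$; for $\theta>1$ your $(1-\theta)^2$ argument goes through. (The paper avoids this case split by first weakening $\theta^{-3/2}-\theta^{-1}\ge -(3\theta)^{-1}$, valid since $\sqrt\theta\le 3/2$, and then using $1/3\le\sigma_\theta\le 1$.) A purely cosmetic slip elsewhere: to obtain your stated lower bound on the cross term from Lemma \ref{lem:deltak}(a), the effective multiplier is $(\sigma_\theta+\theta-1)/\theta$ on the rearranged inequality rather than $(\sigma_\theta+\theta-1)/\sqrt\theta$; the bound you then write down is the correct one.
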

\begin{proof}
The inclusion \eqref{breg-subpro} follows from  \eqref{aux.0}-\eqref{aux.1}, \eqref{def:zk} and  definitions of $T$ and $dw$.
Now it remains to show that the error condition \eqref{breg-cond1} holds. First of all, it follows from \eqref{aux.3}, \eqref{df:dw_admm}, \eqref{def:deltasyx} and \eqref{def:zk}  that  
\begin{align}
(dw)_{ z_{k}}(\tilde{z}_k)  +\lambda_k\varepsilon_k&=\frac{1}{2\beta\theta}\|\tx_k-x_k\|_\X^2= \frac{1}{2\beta\theta}\left\|\beta C\Delta y_k +\frac{1-\theta}{\theta}{\Delta x_k}\right\|_\X^2  \nonumber \\
                                                                              & = \frac{\beta}{2\theta}\|C\Delta y_k\|_\X^2 +\frac{(1-\theta)}{ \theta^2}\inner { C\Delta y_k}{\Delta x_k}_\X+\frac{(1-\theta)^2}{2\beta\theta^3}\|\Delta x_k\|_\X^2 \label{aux.102}.
\end{align}    
Also, \eqref{df:dw_admm}, \eqref{def:deltasyx} and  \eqref{def:zk}  imply that
    \begin{equation}\label{ad90}
     (dw)_{ z_{k-1}}(\tilde{z}_k)     = \frac{1}{2}\|\Delta s_k\|_{\mathcal{S},H}^2+\frac{1}{2}\|\Delta y_k\|_{\Y,G}^2+ \frac{\beta}{2} \|C\Delta y_k\|_\X^2+\frac{1}{2\beta\theta}\|x_{k-1}-\tilde x_k\|_\X^2.
   \end{equation}        
It follows from \eqref{aux.3} and \eqref{def:deltasyx} that  
   \begin{equation*}\label{eq:righpecond_eta}
\|x_{k-1}-\tilde x_k\|_\X^2 =\left\|\beta C\Delta y_k +\frac{1}{\theta}\Delta x_k\right\|_\X^2 =
                                           {\beta^2}\|C\Delta y_k\|_\X^2
                                                                                          +\frac{2\beta}{\theta}\inner { C\Delta y_k}{\Delta x_k}_\X+\frac{1}{\theta^2}\|\Delta x_k\|_\X^2                                                                                                                                                                                                                                                                  
\end{equation*} 
which, combined with \eqref{ad90}, yields 
\begin{equation}\label{aux.103}
     (dw)_{ z_{k-1}}(\tilde{z}_k)     = \frac{\|\Delta s_k\|_{\mathcal{S},H}^2}{2}+\frac{\|\Delta y_k\|_{\Y,G}^2}{2}+ \frac{\beta(\theta+1)\|C\Delta y_k\|_\X^2}{2\theta} +\frac{1}{\theta^2}\inner { C\Delta y_k}{\Delta x_k}_\X+\frac{\|\Delta x_k\|_\X^2}{2\beta\theta^3}.
 \end{equation}
Therefore, combining \eqref{aux.102} and \eqref{aux.103}, we see, after simple algebraic manipulations, that the error condition  \eqref{breg-cond1} is satisfied if and only if  
\begin{align}
[\sigma(1+\theta)-1 ]\frac{\beta\|C\Delta y_k\|_\X^2}{2\theta}&+\left[\sigma-(\theta-1)^2\right]\frac{\|\Delta x_k\|_\X^2}{2\beta\theta^3}  +\sigma \frac{\|\Delta y_k\|_{\Y,G}^2}{2}+\frac{(\sigma +\theta-1)}{\theta^2}\langle C\Delta y_k, \Delta x_k\rangle_\X\nonumber\\
  & \geq   \eta_k-\eta_{k-1}-\sigma \frac{\|\Delta s_k\|_{\mathcal{S},H}^2}{2}\label{ineq:proximal ADMMhpe}.
 \end{align}
We now show that inequality \eqref{ineq:proximal ADMMhpe} with $\sigma=\sigma_\theta$ holds for $k=1$.
Indeed, it follows from definition of  $\eta_1$  and $\sigma_\theta \geq {1}/{(1+\theta)}$ (see Lemma~\ref{pro:sigma}~(b)) that 
\begin{align*}
&[\sigma_\theta(1+\theta)-1 ] \frac{\beta\|C\Delta y_1\|_\X^2}{2\theta}+
\left[\sigma_\theta-(1-\theta)^2\right]\frac{\|\Delta x_1\|_\X^2}{2\beta\theta^3}  +
\sigma_\theta \frac{\|\Delta y_1\|_{\Y,G}^2}{2} 
+\frac{(\sigma_\theta +\theta-1)}{\theta^2}\langle C\Delta y_1,\Delta x_1 \rangle_\X \\
&\geq 
\left[\sigma_\theta-\frac{\sigma_\theta+\theta-1}{\theta}  +\frac{\sigma_\theta +\theta-1}{\theta^{3/2}} \right] \frac{\|\Delta y_1\|_{\Y,G}^2}{2}+ \eta_1  +\frac{(\sigma_\theta +\theta-1)}{\theta^{3/2}}\left(  \frac{1}{\sqrt{\theta}}\langle C\Delta y_1,\Delta x_1 \rangle_\X -\frac{1}{2}\|\Delta y_1\|_{\Y,G}^2\right)\\
&\geq \left[\sigma_\theta-\frac{\sigma_\theta+\theta-1}{3\theta}   \right] \frac{\|\Delta y_1\|_{\Y,G}^2}{2}+\eta_1-\frac{(\sigma_\theta +\theta-1)}{\theta}{\tau_\theta d_0}\\
&  \geq     \eta_1  - \eta_0
\end{align*}
where the second inequality follows from the fact that $\sqrt{\theta}\leq 3/2$ and Lemmas~\ref{lem:deltak}(a) and  \ref{pro:sigma}(b),  and   the 
third inequality is due to the fact that  $1/3\leq\sigma_\theta \leq 1$ (see Lemma~\ref{pro:sigma}(b))  and definition of $\eta_0$.
Therefore,  inequality  \eqref{ineq:proximal ADMMhpe} holds with $k=1$ and $\sigma=\sigma_\theta$.

We next show that inequality  \eqref{ineq:proximal ADMMhpe} with $\sigma=\sigma_\theta$ holds for $k \ge 2$.
Using Lemma~\ref{lem:deltak}(b) and the definition of $\{\eta_k\}$ in \eqref{eta}, we see, after simple calculus, 
that a sufficient condition for \eqref{ineq:proximal ADMMhpe} to hold with $\sigma=\sigma_\theta$ and $k \ge 2$ is that
\[
 (\sigma_\theta(1+\theta)-1)\beta\frac{\|C\Delta y_k\|_\X^2}{2}+[\sigma_\theta-(1-\theta)^2]\frac{\|\Delta x_{k-1}\|_\X^2}{2\beta\theta^2} 
  +\frac{(\sigma_\theta + \theta-1)(1-\theta)}{\theta}\langle C\Delta y_k ,\Delta x_{k-1}\rangle_\X\geq 0.
\]
Hence, since the last inequality holds due to Lemma~\ref{pro:sigma}(c),  
we conclude  the proof of the theorem.
\end{proof}

Now we are ready to present the proof of the pointwise convergence rate  of the proximal ADMM.

\noindent {\bf Proof of Theorem~\ref{th:pointwise}:} 
Since $\sigma_\theta \in [0,1)$ for any  $\theta\in (0,(\sqrt{5}+1)/2)$ and  $w$ as defined in \eqref{df:norm_admm} is a $(1,1)$-regular distance generating function 
and  $\underline{\lambda}:= \inf \lambda_k=1$,
we obtain  by combining Theorems~\ref{th:admm_hpe} and  \ref{th:pointwiseHPE}(a) that inclusion \eqref{eq:th_incADMMtheta1}  holds and there exists $i\leq k$ such that 
 \[
 \left(\|s_{i-1}-s_i\|_{\mathcal{S},H}^2+\|y_{i-1}-y_i\|_{\Y,(G+\beta C^*C)}^2+\frac{1}{\beta\theta}\norm{x_{i-1}-x_i}_\mathcal{X}^2\right)^{1/2}\leq
\frac{2}{\sqrt{k}} \sqrt{\frac{(1+\sigma_\theta)d_0+2\eta_0}{1-\sigma_\theta}},
\]
where we also used the definition of the norm $\|\cdot\|$ in \eqref{df:norm_admm} and Proposition~\ref{propdualnorm}.
The result now follows  from the last inequality and 
definition of  $\eta_0$ in  \eqref{eta}.  \hfill{ $\square$}

 In order to establish the ergodic convergence rate of the proximal ADMM, we need the next auxiliary result.
 
 \begin{lemma}\label{lem:aux3}
Let $\{(s_k,y_k,x_k)\}$  be the sequence generated by the proximal ADMM and $\{\tx_k\}$ be  given by  \eqref{xtilde}.
Then, the pair $(z_{k-1},\tz_k)$ as defined in \eqref{def:zk} satisfies
\[
(dw)_{ z_{k-1}}(\tilde{z}_k)  \leq \frac{4(1+\tau_\theta)(\theta^2+\theta+1)}{\theta^2}  d_0 \quad k\geq1,
\]
where   $dw$ is  the Bregman distance given in \eqref{df:dw_admm}, $d_0$ and $\tau_\theta$ are  as in  \eqref{def:d0admm} and \eqref{def:tau}, respectively.
\end{lemma}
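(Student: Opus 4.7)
The strategy is to start from the exact identity for $(dw)_{z_{k-1}}(\tilde z_k)$ derived within the proof of Theorem~\ref{th:admm_hpe} and reduce its right-hand side, piece by piece, to multiples of $d_0$ by using the non-increasing property of the NE-HPE sequence established in Lemma~\ref{lema_desigualdades}(d).

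More explicitly, I would first invoke \eqref{ad90} to write
\[
(dw)_{z_{k-1}}(\tilde z_k) = \tfrac{1}{2}\|\Delta s_k\|_{\mathcal{S},H}^2 + \tfrac{1}{2}\|\Delta y_k\|_{\Y,G+\beta C^*C}^2 + \tfrac{1}{2\beta\theta}\|x_{k-1}-\tilde x_k\|_\X^2,
\]
and use the relation \eqref{aux.3} together with Young's inequality $2|\langle a,b\rangle_\X|\le \beta\theta\|a\|_\X^2+(\beta\theta)^{-1}\|b\|_\X^2$ to bound the last term by $\tfrac{\beta}{\theta}\|C\Delta y_k\|_\X^2 + \tfrac{1}{\beta\theta^3}\|\Delta x_k\|_\X^2$. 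Next, for each squared-seminorm of a difference, I would apply $\|u-v\|^2\le 2\|u-z^*_{\mathrm{comp}}\|^2 + 2\|v-z^*_{\mathrm{comp}}\|^2$ (for an arbitrary $z^*=(s^*,y^*,x^*)\in T^{-1}(0)$) together with the trivial bound $\beta\|C(y-y')\|_\X^2\le \|y-y'\|_{\Y,G+\beta C^*C}^2$ to reduce everything to iterate-to-$z^*$ distances.

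At this point I would invoke Theorem~\ref{th:admm_hpe}, which identifies the proximal ADMM as an NE-HPE instance with $\lambda_k\equiv 1$, $\varepsilon_k\equiv 0$, $\sigma=\sigma_\theta$ and $\eta_0=\tau_\theta d_0$. Lemma~\ref{lema_desigualdades}(d) then yields
\[
(dw)_{z_j}(z^*) \le (dw)_{z_j}(z^*)+\eta_j \le (dw)_{z_0}(z^*)+\eta_0,\qquad j\in\{k-1,k\},
\]
which, coupled with the componentwise estimates $\tfrac12\|s_j-s^*\|_{\mathcal{S},H}^2,\ \tfrac12\|y_j-y^*\|_{\Y,G+\beta C^*C}^2,\ \tfrac{1}{2\beta\theta}\|x_j-x^*\|_\X^2 \le (dw)_{z_j}(z^*)$, controls every piece by a multiple of $(dw)_{z_0}(z^*)+\eta_0$. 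Taking the infimum over $z^*\in T^{-1}(0)$, the definition \eqref{def:d0admm} of $d_0$ and $\eta_0=\tau_\theta d_0$ give the factor $(1+\tau_\theta)d_0$, and collecting the $\theta$-dependent coefficients (which contribute powers $\theta^{0},\theta^{-1},\theta^{-2}$ respectively) yields the pattern $(\theta^2+\theta+1)/\theta^2$.

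\textbf{Main obstacle.} The principal difficulty is tracking constants cleanly enough to match the stated prefactor $4$: each use of $\|a+b\|^2\le 2\|a\|^2+2\|b\|^2$ introduces a factor of~$2$, and it appears twice (once in the Young estimate for $\|x_{k-1}-\tilde x_k\|_\X^2$, once in the reduction from $\|\Delta z_k\|^2$ to distances from $z^*$). Balancing the Young weight appropriately, and possibly treating $k=1$ and $k\ge 2$ separately—the latter case allowing Lemma~\ref{lem:deltak}(b) to absorb part of the cross term $\langle C\Delta y_k,\Delta x_k\rangle_\X$ against $\tfrac12\|\Delta y_k\|_{\Y,G}^2$—is what is needed to avoid accumulating an extra factor of~$2$ in the final estimate. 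Beyond this arithmetic book-keeping, the proof is purely mechanical.
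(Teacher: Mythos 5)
Your proposal follows essentially the same route as the paper's proof: expand $(dw)_{z_{k-1}}(\tilde z_k)$ via \eqref{ad90}, control $\frac{1}{2\beta\theta}\|x_{k-1}-\tilde x_k\|_\X^2$ through \eqref{aux.3} and Young's inequality, pass from $\|\Delta z_k\|^2$-type terms to distances from a solution $z^*$ via $\|u-v\|^2\le 2\|u-z^*\|^2+2\|v-z^*\|^2$, invoke Theorem~\ref{th:admm_hpe} together with Lemma~\ref{lema_desigualdades}(d) to bound $(dw)_{z_{k-1}}(z^*)+(dw)_{z_k}(z^*)\le 2((dw)_{z_0}(z^*)+\eta_0)$, and finish by taking the infimum over $z^*$ and using $\eta_0=\tau_\theta d_0$. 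The one point you left open — the choice of Young weight — is resolved as follows: your weight $\beta\theta$ versus $(\beta\theta)^{-1}$ gives the bound $\frac{\beta}{\theta}\|C\Delta y_k\|_\X^2+\frac{1}{\beta\theta^3}\|\Delta x_k\|_\X^2$, whose resulting coefficients $1+\frac{2}{\theta}$ and $\frac{2}{\theta^2}$ exceed $\frac{\theta^2+\theta+1}{\theta^2}$ when $\theta>1$, so the stated constant would not be recovered on the whole range $(0,(\sqrt5+1)/2]$. The paper instead bounds the cross term by $\frac{1}{\theta^2}\inner{C\Delta y_k}{\Delta x_k}_\X\le \frac{\beta}{2\theta^2}\|C\Delta y_k\|_\X^2+\frac{1}{2\beta\theta^2}\|\Delta x_k\|_\X^2$ (weight $\beta$ versus $\beta^{-1}$), which yields the extra coefficient $\frac{\theta+1}{\theta^2}$ on \emph{both} the $\|C\Delta y_k\|_\X^2$ and $\|\Delta x_k\|_\X^2$ terms simultaneously, hence the uniform factor $\frac{\theta^2+\theta+1}{\theta^2}$; the prefactor $4$ then comes exactly from the two factors of $2$ you identified, and no case split between $k=1$ and $k\ge 2$ (nor any appeal to Lemma~\ref{lem:deltak}(b)) is needed.
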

 \begin{proof} It follows from \eqref{df:dw_admm} and \eqref{def:zk} that 
   \begin{equation}\label{eq:pij}
 (dw)_{ z_{k-1}}(\tilde{z}_k)  = \frac{1}{2}\|\Delta s_k\|_{\mathcal{S},H}^2+\frac{1}{2}\|\Delta y_k\|_{\Y,G}^2+ \frac{\beta}{2} \|C\Delta y_k\|_\X^2+\frac{1}{2\beta\theta}\|x_{k-1}-\tilde x_k\|_\X^2.
  \end{equation}                                                                                      
  On the other hand, using \eqref{aux.3} we have       
   \begin{align*}                                                                                        
\frac{1}{2\beta\theta}\|x_{k-1}-\tilde x_k\|_\X^2&=                                                        
                                                                                          \frac{1}{2\beta\theta}\|\beta C\Delta y_k +\frac{\Delta x_k}{\theta}\|_\X^2 \\
                                                                                          &= \frac{\beta}{2\theta} \|C\Delta y_k\|_\X^2 
                                                                                          +\frac{1}{\beta \theta^2}\inner {\beta C\Delta y_k}{\Delta x_k}_\X+\frac{1}{2\beta\theta^3}\|\Delta x_k\|_\X^2 \\
                                                                                                                                                                                       &\leq \frac{\beta(\theta+1)}{2\theta^2} \|C\Delta y_k\|_\X^2 +\frac{\theta+1}{2\beta\theta^3}\|\Delta x_k\|_\X^2 
\end{align*}                                                                              
where the inequality is due to  Cauchy-Schwarz inequality and  the fact that $2ab\leq a^2+b^2$ for all $ a,b\geq 0$.
Combining the last inequality and~\eqref{eq:pij}, we have                                                                                         
     \begin{align*}                                                                 
  (dw)_{ z_{k-1}}(\tilde{z}_k)& \leq\frac{(\theta^2+\theta+1)}{\theta^2} \Big[\frac{1}{2}\|\Delta s_k\|_{\mathcal{S},H}^2+
  \frac{1}{2}\|\Delta y_k\|_{\Y,G}^2+ \frac{\beta}{2} \|C\Delta y_k\|_\X^2+  \frac{1}{2\beta\theta}\|\Delta x_k\|_\X^2\Big] \\
                                           &\leq \frac{2(\theta^2+\theta+1)}{\theta^2} \left[ (dw)_{ z_{k-1}}(z^*)+  (dw)_{ z_{k}}({z}^*)  \right] 
 \end{align*}   
where the last inequality is due to   \eqref{df:dw_admm} and the fact that $2ab\leq a^2+b^2$ for all $ a,b\geq 0$.
Hence, since by Theorem~\ref{th:admm_hpe} the proximal ADMM is an instance of the NE-HPE framework,  it  follows from the last estimative and  Lemma~\ref{lema_desigualdades}(d)  that 
 \[  (dw)_{ z_{k-1}}(\tilde{z}_k)     \leq \frac{4(\theta^2+\theta+1)}{\theta^2}  ((dw)_{ z_{0}}(z^*)+\eta_0)\]
which, combined with the definition of $\eta_0$ in \eqref{eta} and the fact that $d_0$ (as defined in  \eqref{def:d0admm}) satisfies $d_0= \inf_{z \in T^{-1}(0)} (dw)_{z_0}(z)$, proves the result.
\end{proof}
 
Next we present the proof of the ergodic iteration-complexity bound for the proximal ADMM.

\noindent {\bf Proof of Theorem~\ref{th:ergodicproximal ADMM}:} First, it follows from  Theorem~\ref{th:admm_hpe}  that  the proximal ADMM with $\theta\in (0,(\sqrt{5}+1)/2]$ is an instance of the NE-HPE  applied to problem \eqref{FAB} in which $\sigma:=\sigma_\theta$, $\{(z_k,\tilde{z}_k,\lambda_k,\varepsilon_k)\}$ and $\{\eta_k\}$ are as defined in  \eqref{def:sigma}, \eqref{def:zk} and \eqref{eta}, respectively.
Moreover, inclusions \eqref{aux.0}-\eqref{aux.1} is equivalent to 
\[ 
H(s_{k-1}-s_{k})+D^*\tilde{x}_k \in \partial g(s_k),\quad (G+\beta C^*C)(y_{k-1}-y_{k})+C^*\tilde{x}_k \in \partial f(y_k), \quad \frac{1}{\beta\theta}(x_{k-1}-x_k)=Cy_k+Ds_k-c.
\]
Hence, using \eqref{eq:jase12} we obtain trivially the third inclusion of \eqref{eq:th_incADMMtheta<1} while the first and second inclusions of \eqref{eq:th_incADMMtheta<1}  follow from \eqref{eq:jase12} and the transportation formula (see  \cite[Theorem~2.3]{Bu-Sag-Sv:teps1}).
   Now, since  $w$ as defined in \eqref{df:norm_admm} is a $(1,1)$-regular distance generating function 
and  $1=\underline{\lambda}= \inf \lambda_k$,
we obtain from Theorem~\ref{th:ergHPE} and Lemma~\ref{lem:aux3} that
\begin{align}\label{eq:al90}
 \|(r^a_{k,s}, r^a_{k,y}, r^a_{k,x})\|^* &\le \frac{2\sqrt{2(d_0+\eta_0)}}{k}, \,
 \varepsilon^a_{k} \leq 
\frac{ 3[  3\theta^2(d_{0} +\eta_0)+4\sigma_{\theta}(1+\tau_{\theta})(\theta^2+\theta+1)d_0]  }{\theta^2k}, \forall k\geq 1.
\end{align}
Hence, since  $(r^a_{k,s}, r^a_{k,y}, r^a_{k,x})=(H (s_{k-1}^a-s_k^a),  
(G+\beta C^*C)(y_{k-1}^a-y_k^a),(x_{k-1}^a-x_k^a)/(\beta\theta)$, it follows from  the definition of $\|\cdot\|$ in \eqref{df:norm_admm}  and  Proposition~\ref{propdualnorm} that
\begin{align*}
 \|(r^a_{k,s}, r^a_{k,y}, r^a_{k,x})\|^* = \left(\|s_{k-1}^a-s_k^a\|_{\mathcal{S},H}^2+\|y_{k-1}^a-y_k^a\|_{\Y,(G+\beta C^*C)}^2+\frac{1}{\beta\theta}\norm{x_{k-1}^a-x_k^a}_\mathcal{X}^2\right)^{1/2}.
\end{align*}
Therefore, the result follows  from  \eqref{eq:al90} and
the definition of  $\eta_0$ in  \eqref{eta}. \hfill{ $\square$}

\def\cprime{$'$}


\begin{thebibliography}{10}

\bibitem{Boyd:2011}
S.~Boyd, N.~Parikh, E.~Chu, B.~Peleato, and J.~Eckstein.
\newblock Distributed optimization and statistical learning via the alternating
  direction method of multipliers.
\newblock {\em Found. Trends Mach. Learn.}, 3(1):1--122, 2011.

\bibitem{Bu-Sag-Sv:teps1}
R.~S. Burachik, C.~A. Sagastiz{\'a}bal, and B.~F. Svaiter.
\newblock {$\epsilon$}-enlargements of maximal monotone operators: theory and
  applications.
\newblock In {\em Reformulation: nonsmooth, piecewise smooth, semismooth and
  smoothing methods ({L}ausanne, 1997)}, volume~22 of {\em Appl. Optim.}, pages
  25--43. Kluwer Acad. Publ., Dordrecht, 1999.

\bibitem{ChPo_ref2}
A.~Chambolle and T.~Pock.
\newblock A first-order primal-dual algorithm for convex problems with
  applications to imaging.
\newblock {\em J. Math. Imaging Vis.}, 40(1):120--145, 2011.

\bibitem{Cui}
Y.~Cui, X.~Li, D.~Sun, and K.~C. Toh.
\newblock On the convergence properties of a majorized {A}{D}{M}{M} for
  linearly constrained convex optimization problems with coupled objective
  functions.
\newblock {\em J. Optim. Theory Appl.}, 169(3):1013--1041, 2016.

\bibitem{Deng1}
W.~Deng and W.~Yin.
\newblock On the global and linear convergence of the generalized alternating
  direction method of multipliers.
\newblock {\em J. Sci. Comput.}, pages 1--28, 2015.

\bibitem{PADMM_Eckstein}
J.~Eckstein.
\newblock Some saddle-function splitting methods for convex programming.
\newblock {\em Optim. Method Softw.}, 4(1):75--83, 1994.

\bibitem{GADMM2015}
E.~X. Fang, B.~He, H.~Liu, and X.~Yuan.
\newblock Generalized alternating direction method of multipliers: new
  theoretical insights and applications.
\newblock {\em Math. Prog. Comp.}, 7(2):149--187, 2015.

\bibitem{FPST_editor}
M.~Fazel, T.~K. Pong, D.~Sun, and P.~Tseng.
\newblock Hankel matrix rank minimization with applications to system
  identification and realization.
\newblock {\em SIAM J. Matrix Anal. Appl.}, 34(3):946--977, 2013.

\bibitem{0352.65034}
D.~Gabay and B.~Mercier.
\newblock {A dual algorithm for the solution of nonlinear variational problems
  via finite element approximation.}
\newblock {\em Comput. Math. Appl.}, 2:17--40, 1976.

\bibitem{glowinski1984}
R.~Glowinski.
\newblock {\em Numerical Methods for Nonlinear Variational Problems}.
\newblock Springer Series in Computational Physics. Springer-Verlag, 1984.

\bibitem{0368.65053}
R.~Glowinski and A.~Marroco.
\newblock {Sur l'approximation, par \'el\'ements finis d'ordre un, et la
  r\'esolution, par penalisation-dualit\'e, d'une classe de probl\`emes de
  Dirichlet non lin\'eaires.}
\newblock 1975.

\bibitem{MJR}
M.~L.~N. Gon{\c{c}}alves, J.~G. Melo, and R.~D.~C. Monteiro.
\newblock Improved pointwise iteration-complexity of a regularized {A}{D}{M}{M}
  and of a regularized non-euclidean {H}{P}{E} framework.
\newblock {\em Avaliable on http://www.arxiv.org}.

\bibitem{Gu2015}
Y.~Gu, B.~Jiang, and H.~Deren.
\newblock A semi-proximal-based strictly contractive {P}eaceman-{R}achford
  splitting method.
\newblock {\em Avaliable on http://www.arxiv.org}.

\bibitem{Hager}
W.~W. Hager, M.~Yashtini, and H.~Zhang.
\newblock An ${O}(1/k)$ convergence rate for the variable stepsize {B}regman
  operator splitting algorithm.
\newblock {\em SIAM J. Numer. Anal.}, 54(3):1535--1556, 2016.

\bibitem{He2}
B.~He, F.~Ma, and X.~Yuan.
\newblock On the step size of symmetric alternating directions method of
  multipliers.
\newblock {\em Avaliable on http://www.optimization-online.org}.

\bibitem{HeLinear}
B.~He and X.~Yuan.
\newblock On the $\mathcal{O}(1/n)$ convergence rate of the
  {D}ouglas-{€"R}achford alternating direction method.
\newblock {\em SIAM Journal on Numer. Anal.}, 50(2):700--709, 2012.

\bibitem{He2015}
B.~He and X.~Yuan.
\newblock On non-ergodic convergence rate of {D}ouglas-{€"R}achford alternating
  direction method of multipliers.
\newblock {\em Numer. Math.}, 130(3):567--577, 2015.

\bibitem{YHe1}
Y.~He and R.~D.~C. Monteiro.
\newblock Accelerating block-decomposition first-order methods for solving
  composite saddle-point and two-player {N}ash equilibrium problems.
\newblock {\em SIAM J. Optim.}, 25(4):2182--2211, 2015.

\bibitem{YHe2}
Y.~He and R.~D.~C. Monteiro.
\newblock An accelerated hpe-type algorithm for a class of composite
  convex-concave saddle-point problems.
\newblock {\em SIAM J. Optim.}, 26(1):29--56, 2016.

\bibitem{Oliver}
O.~Kolossoski and R.~D.~C. Monteiro.
\newblock An accelerated non-euclidean hybrid proximal extragradient-type
  algorithm for convex-concave saddle-point problems.
\newblock {\em Avaliable on http://www.optimization-online.org}.

\bibitem{Lin}
T.~Lin, S.~Ma, and S.~Zhang.
\newblock An extragradient-based alternating direction method for convex
  minimization.
\newblock {\em Found. Comput. Math.}, pages 1--25, 2015.

\bibitem{Maicon}
M.~Marques~Alves, R.~D.~C. Monteiro, and B.~F. Svaiter.
\newblock Regularized {H}{P}{E}-type methods for solving monotone inclusions
  with improved pointwise iteration-complexity bounds.
\newblock {\em Avaliable on http://www.arxiv.org}.

\bibitem{monteiro2010complexity}
R.~D.~C. Monteiro and B.~F. Svaiter.
\newblock On the complexity of the hybrid proximal extragradient method for the
  iterates and the ergodic mean.
\newblock {\em SIAM J. Optim.}, 20(6):2755--2787, 2010.

\bibitem{monteiro2011complexity}
R.~D.~C. Monteiro and B.~F. Svaiter.
\newblock Complexity of variants of {Tseng}'s modified {F-B} splitting and
  korpelevich's methods for hemivariational inequalities with applications to
  saddle-point and convex optimization problems.
\newblock {\em SIAM J. Optim.}, 21(4):1688--1720, 2011.

\bibitem{monteiro2010iteration}
R.~D.~C. Monteiro and B.~F Svaiter.
\newblock Iteration-complexity of block-decomposition algorithms and the
  alternating direction method of multipliers.
\newblock {\em SIAM J. Optim.}, 23(1):475--507, 2013.

\bibitem{LanADMM}
Y.~Ouyang, Y.~Chen, G.~Lan, and E.~Pasiliao~Jr.
\newblock An accelerated linearized alternating direction method of
  multipliers.
\newblock {\em SIAM J. Imaging Sci.}, 8(1):644--681, 2015.

\bibitem{Rockafellar}
R.~T. Rockafellar.
\newblock On the maximal monotonicity of subdifferential mappings.
\newblock {\em Pacific J. Math.}, 33:209--216, 1970.

\bibitem{Rock:ppa}
R.~T. Rockafellar.
\newblock Monotone operators and the proximal point algorithm.
\newblock {\em SIAM J. Control Optim.}, 14(5):877--898, 1976.

\bibitem{Sol-Sv:hy.ext}
M.~V. Solodov and B.~F. Svaiter.
\newblock A hybrid approximate extragradient-proximal point algorithm using the
  enlargement of a maximal monotone operator.
\newblock {\em Set-Valued Anal.}, 7(4):323--345, 1999.

\bibitem{Sol-Sv:hy.breg}
M.~V. Solodov and B.~F. Svaiter.
\newblock An inexact hybrid generalized proximal point algorithm and some new
  results on the theory of {B}regman functions.
\newblock {\em Math. Oper. Res.}, 25(2):214--230, 2000.

\bibitem{Wang2012}
X.~Wang and X~Yuan.
\newblock The linearized alternating direction method of multipliers for
  dantzig selector.
\newblock {\em SIAM J. Sci. Comput.}, 34(5):2792--2811, 2012.

\bibitem{Yang_linearizedaugmented}
J.~Yang and X.~Yuan.
\newblock Linearized augmented {L}agrangian and alternating direction methods
  for nuclear norm minimization.
\newblock {\em Math. Comput.}, 82(281):301--329, 2013.

\bibitem{Xahang}
X.~Zhang, M.~Burger, X.~Bresson, and S.~Osher.
\newblock Bregmanized nonlocal regularization for deconvolution and sparse
  reconstruction.
\newblock {\em SIAM J. Imaging Sci.}, 3(3):253--276, 2010.

\bibitem{Zhang2010}
X.~Zhang, M.~Burger, and S.~Osher.
\newblock A unified primal-dual algorithm framework based on  {B}regman
  iteration.
\newblock {\em J. Sci. Comput.}, 46(1):20--46, 2010.

\end{thebibliography}

\end{document}